\def\hstrut#1{\rule{#1}{0cm}}
\def\strut{\hstrut{0cm}}
\def\mip{\par\medskip  }
\def\DS{\displaystyle}
\def\SSS{\scriptscriptstyle} 
\def\stB{\textstyle}
\def\stC{\scriptstyle}
\def\stD{\scriptscriptstyle}
\def\0{\mbox{\large $\varnothing$}}
\def\1#1{{ \f1{#1} }}
\def\2#1{{ \f{#1}2 }}
\def\3#1{{ \f{#1}3 }}
\def\4#1{{ \f{#1}4 }}
\def\5#1{\;\,\mbox{#1}\,\;}  
\def\6{\partial}
\def\7#1{{\mbox{\tiny \rm #1}}}  
\def\8{\infty}
\def\9#1{{\SSS\rm{#1}}}
\def\q{\quad}
\def\bq{\!\!\!\!\!\!}
\def\f{\frac}
\def\x{\times}
\def\nn{\nonumber}
\def\sr{\stackrel} 
\def\cd{\cdot}
\def\ub{\underbrace} 
\def\wt{\widetilde}
\def\mc{\multicolumn} 
\def\hs{\hspace*}
\def\lgl{\langle} \def\rgl{\rangle}
\def\opp{{\7{op}}}
\def\llongrightarrow{-\!\!\!-\!\!\!\longrightarrow}
\def\mto{\mapsto}
\def\aa{\alpha} \def\ba{\beta} \def\ga{\gamma} \def\da{\delta}
\def\Ga{\Gamma}
\def\phi{\varphi}
\def\rho{\varrho}
\def\Mbb{\mathbb}
  \def\C{{\Mbb C}}
 \def\N{{\Mbb N}} 
  \def\R{{\Mbb R}}
\def\S{{\Mbb S}}  
 \def\Z{{\Mbb Z}}
\def\cal{\mathcal}
\def\cA{{\cal A}}  \def\cC{{\cal C}}
\def\cD{{\cal D}} \def\cE{{\cal E}} \def\cF{{\cal F}}
\def\bA{{\bf A}}
\def\bP{{\bf P}}
\def\eq{      \mathrel {\;        =   \;}}
\def\defeq{   \mathrel {\;\colon\!{=} \;}}
\def\eig{    \mathop{\rm eig  } }
\def\ker{    \mathop{\rm ker  } }
\def\Lq{{\sqsubset}}
\def\Rq{{\sqsupset}}
\def\id{     \mathord{\rm id}  }
\newenvironment{proof} {{\bf Proof }}{\strut\hfill\epsy\bigskip} \def\epsy{$\blacklozenge$}
\newtheorem{theorem}{Theorem}
\newtheorem{lemma}[theorem]{Lemma}
\newfont{\fiverm}{cmr5 scaled 1000}
\newfont{\sixrm}{cmr6 scaled 1000}
\newfont{\sevenrm}{cmr7 scaled 1000}
\newfont{\eightrm}{cmr8 scaled 1000}
\newfont{\ninerm}{cmr9 scaled 1000}
\newfont{\tenrm}{cmr10 scaled 1000}
\def\SC #1 #2 {\setcoordinatesystem units <1mm,1mm> point at {-#1} {-#2} }
\begin{document}

\title{Orthogonal Polynomials appearing in SIE grid representations}

\author{Stefan Hilger \\ 
Katholische Universit\"{a}t Eichst\"{a}tt--Ingolstadt } 
\maketitle

\begin{abstract}
We show in this article how orthogonal polynomials appear in certain representations of grid shaped quivers.
After a short introduction into the general notion of quivers and their representations by linear operators
we define the notion of an SIE quiver representation: All intrinsic endomorphisms arising from circuits in the
quiver act as scalar multipliers.
We then present several lemmas that ensure this SIE property of a quiver representation.
Ladder commutator conditions and certain diagram commutativity ``up to scalar multiples'' play a central role.
The theory will then be applied to three examples. Extensive calculations shows how Associated Laguerre, Legendre--Gegenbauer
polynomials and binomial distributions fit into the framework of grid shaped SIE quivers.
One can see, that this algebraic point of view is foundational for orthogonal polynomials and special functions.
\end{abstract}

\mip
\textsc{Keywords} Orthogonal polynomials, Quiver representations, Ladders, Weyl algebra, Differential and Difference Operators
\mip
\textsc{MSC}
33C45 
16G20 

\newpage
\section{Introduction}
We show in this article how orthogonal polynomials appear in certain representations of grid shaped quivers.
After a short introduction into the general notion of quivers and their representations by linear operators
between vector spaces we define the notion of an SIE quiver representation: All intrinsic endomorphisms arising from circuits in the
quiver act as scalar multiples.
We then present several theorems that ensure this SIE property of a quiver representation.
Ladder commutator conditions and certain diagram commutativity ``up to scalar multiples'' play a central role.
The theory will then be applied to three examples. Extensive calculations show how Laguerre, Legendre--Gegenbauer polynomials and
the discrete binomial distribution fit into the framework of grid shaped SIE quivers.
This point of view onto orthogonal polynomials seems not to be known, see \cite{Dunkl/Xu}, \cite{Koelink/VanAssche}, \cite{Koekoek/Swarttouw}, e.g.
Note that we study operator algebraic aspects of orthogonal polynomials, we do not pursue orthogonality in the literal sense
or functional analytic or numerical approaches.
\mip
An  enhancement of the theory including further examples of orthogonal polynomials, $q$--orthogonal polynomials connected by operators of
the Weyl algebra, $h$--Weyl algebra, $q$--Weyl algebra or $(q,h)$--Weyl algebra, see \cite{Hilger/Filipuk}, is not included here.

\section{Grids}

In this article we will study so called grids of operators.
Let $\cA$ be a fixed associative unital algebra over $\C$ with a representation on a vector space $V$.

A grid is a directed graph with vertices $\bullet_{nm}$, indexed by pairs $(n,m) \in \Z\x\Z$.
Between two neighbouring vertices there is a pair of arrows, one in each direction.
There are horizontal, vertical and diagonal arrows.
The local situation at the vertex $\bullet_{nm}$ is shown in the diagram

\begin{equation}
\def\plotSA #1 #2 #3 #4 /{ \arrow <1mm> [.25,.75] from #1 #2 to #3 #4 }
\def\SC #1 #2 {\setcoordinatesystem units <.5mm,.5mm> point at {-#1} {-#2} }
\label{grid quiver local}
\beginpicture
\SC  00  00 \setplotarea x from -90 to +70, y from -60 to +60

\SC -60 -60 \put { $\bullet_{n-1,m-1}$ } [] at 00 00
\SC -60  00 \put { $\bullet_{n-1,m  }$ } [] at 00 00
\SC -60  60 \put { $\bullet_{n-1,m+1}$ } [] at 00 00
\SC  00 -60 \put { $\bullet_{n  ,m-1}$ } [] at 00 00
\SC  00  00 \put { $\bullet_{n  ,m  }$ } [] at 00 00
\SC  00  60 \put { $\bullet_{n  ,m+1}$ } [] at 00 00
\SC +60 -60 \put { $\bullet_{n+1,m-1}$ } [] at 00 00
\SC  60  00 \put { $\bullet_{n+1,m  }$ } [] at 00 00
\SC +60 +60 \put { $\bullet_{n+1,m+1}$ } [] at 00 00

\SC  00  00 \plotSA +20   +01   +40   +01   / \put {\tiny $a^+_{n+1,m  } $} [b ] at +30  02
\SC  00  60 \plotSA +20   +01   +40   +01   / \put {\tiny $a^+_{n+1,m+1} $} [b ] at +30  02
\SC -60  60 \plotSA +20   +01   +40   +01   / \put {\tiny $a^+_{n  ,m+1} $} [b ] at +30  02
\SC -60  00 \plotSA +20   +01   +40   +01   / \put {\tiny $a^+_{n  ,m  } $} [b ] at +30  02
\SC -60 -60 \plotSA +20   +01   +40   +01   / \put {\tiny $a^+_{n  ,m-1} $} [b ] at +30  02
\SC  00 -60 \plotSA +20   +01   +40   +01   / \put {\tiny $a^+_{n+1,m-1} $} [b ] at +30  02

\SC  00  00 \plotSA -20   -01   -40   -01   / \put {\tiny $a^-_{n  ,m  } $} [t ] at -30 -02
\SC  60  00 \plotSA -20   -01   -40   -01   / \put {\tiny $a^-_{n+1,m  } $} [t ] at -30 -02
\SC  60  60 \plotSA -20   -01   -40   -01   / \put {\tiny $a^-_{n+1,m+1} $} [t ] at -30 -02
\SC  00  60 \plotSA -20   -01   -40   -01   / \put {\tiny $a^-_{n  ,m+1} $} [t ] at -30 -02
\SC  00 -60 \plotSA -20   -01   -40   -01   / \put {\tiny $a^-_{n  ,m-1} $} [t ] at -30 -02
\SC  60 -60 \plotSA -20   -01   -40   -01   / \put {\tiny $a^-_{n+1,m-1} $} [t ] at -30 -02

\SC  00  00 \plotSA -01   +20   -01   +40   / \put {\tiny $b^+_{n  ,m+1} $} [r ] at -02 +30
\SC  60  00 \plotSA -01   +20   -01   +40   / \put {\tiny $b^+_{n+1,m+1} $} [r ] at -02 +30
\SC -60  00 \plotSA -01   +20   -01   +40   / \put {\tiny $b^+_{n-1,m+1} $} [r ] at -02 +30
\SC -60 -60 \plotSA -01   +20   -01   +40   / \put {\tiny $b^+_{n-1,m  } $} [r ] at -02 +30
\SC  00 -60 \plotSA -01   +20   -01   +40   / \put {\tiny $b^+_{n  ,m  } $} [r ] at -02 +30
\SC  60 -60 \plotSA -01   +20   -01   +40   / \put {\tiny $b^+_{n+1,m  } $} [r ] at -02 +30

\SC  00  00 \plotSA +01   -20   +01   -40   / \put {\tiny $b^-_{n  ,m  } $} [l ] at +02 -30
\SC  60  00 \plotSA +01   -20   +01   -40   / \put {\tiny $b^-_{n+1,m  } $} [l ] at +02 -30
\SC  60  60 \plotSA +01   -20   +01   -40   / \put {\tiny $b^-_{n+1,m+1} $} [l ] at +02 -30
\SC  00  60 \plotSA +01   -20   +01   -40   / \put {\tiny $b^-_{n  ,m+1} $} [l ] at +02 -30
\SC -60  60 \plotSA +01   -20   +01   -40   / \put {\tiny $b^-_{n-1,m+1} $} [l ] at +02 -30
\SC -60  00 \plotSA +01   -20   +01   -40   / \put {\tiny $b^-_{n-1,m  } $} [l ] at +02 -30

\SC  00  00 \plotSA +19.5 +20.5 +39.5 +40.5 / \put {\tiny $p^+_{n+1,m+1} $} [rt] at +33  40
\SC -60 -60 \plotSA +19.5 +20.5 +39.5 +40.5 / \put {\tiny $p^+_{n  ,m  } $} [rt] at +33  40

\SC  00  00 \plotSA -19.5 +20.5 -39.5 +40.5 / \put {\tiny $q^+_{n-1,m+1} $} [lt] <-1mm,+1mm> at -34  40
\SC  60 -60 \plotSA -19.5 +20.5 -39.5 +40.5 / \put {\tiny $q^+_{n  ,m  } $} [lt] <-1mm,+1mm> at -34  40

\SC  00  00 \plotSA -19.5 -20.5 -39.5 -40.5 / \put {\tiny $p^-_{n  ,m  } $} [lb] at -34 -40
\SC  60  60 \plotSA -19.5 -20.5 -39.5 -40.5 / \put {\tiny $p^-_{n+1,m+1} $} [lb] at -34 -40

\SC  00  00 \plotSA +19.5 -20.5 +39.5 -40.5 / \put {\tiny $q^-_{n  ,m  } $} [rb] <+1mm,-1mm> at +34 -40
\SC -60  60 \plotSA +19.5 -20.5 +39.5 -40.5 / \put {\tiny $q^-_{n-1,m+1} $} [rb] <+1mm,-1mm> at +34 -40
\endpicture
\end{equation}

One can see that we have adopted the following general convention for indices of arrows:
An arrow with superscript $+$ has the index of its target vertex.
An arrow with superscript $-$ has the index of its source vertex.

\section{Quivers}

We consider a grid as a special quiver.
Instead of a rigoros definition we describe the notion of a quiver as follows:
A \emph{quiver} is a directed graph $G$. It consists of a set $\bP = \{ \bullet_i | i \in I \}$ of points and a
set $\bA = \{ a_j | j \in J \}$, of arrows connecting the vertices. More accurately, to each arrow $a$ one
can assign a source vertex $s(a)$ and a target vertex $t(a)$ in $\bP$.
The quivers which we are dealing with are infinite, they will have circuits, multiple arrows are allowed.
\mip

A path in $G$ is a sequence $(a_m\, a_{m-1} \cdots a_2\, a_1)$ of consecutive arrows in $\bA$, that is
\[ t(a_{i}) = s(a_{i+1}) \q\q \5{for all} i = 1,\dots,m-1.
\]
We can extend the source and target maps to the set of paths by
\[ s(a_m \cdots a_1) = s(a_1),\q\q t(a_m\cdots a_1) = t(a_m).
\]
If $ a = (a_m \cdots a_1), \wt a = (\wt a_{\wt m} \cdots  \wt a_1)$ are two paths with
$t(a) = s(\wt a)$ we define their product by concatenation or arrows
\[ (\wt a_{\wt m} \cdots \wt a_1) \cd (a_m \cdots a_1) =
                     (\wt a_{\wt m} \cdots \wt a_1 \, a_m \cdots a_1).
\]
In this article we will study grids or quivers that only have double arrows, as shown in the above diagram.
In mathematical terms this means that
that we can assign to each arrow $a$ a unique arrow $a^\opp$ between the same vertices pointing in the opposite direction, that is
\[ t(a^\opp) = s(a), \q\q s(a^\opp) = t(a).
\]
Then for every path $a = (a_m \cdots a_1)$ there is an \emph{opposite} path,
\[  (a_m \cdots a_1)^\opp = (a_1^\opp \cdots a_m^\opp).
\]

A path $a$ is called a \emph{circuit} if $s(a) = t(a)$.
A circuit $a$ is called \emph{narrow}, if there is a path $b$, such that $a \eq b^\opp b$.
This means that the circuit the path $b$ and then returns along the same path in opposite direction.
Circuits that are not narrow will be called \emph{wide}.
A narrow circuit $a$ is called \emph{short}, if there is an arrow $b$, such that $a\eq b^\opp b$.

\section{Representations of Quivers}
Like every directed graph, a quiver or grid can be considered as a category with the vertices as objects and the arrows as morphisms.
A \emph{representation} $\rho$ of this quiver is just a functor from this category to the category of
(complex) vector spaces. In other words it is an assignment
\[ \rho:
\left\{
\begin{array}{rcl}
\bullet_i & \mto & V_i \\
  a_j     & \mto & A_j
\end{array}
\right.
\]
of vertices $\bullet_i$ to vector spaces $V_i$ and of arrows $a_j$ to linear operators $A_j$ acting between these vector spaces
in the obvious  way:
\[ A_j: V_{s(a_j)} \to V_{t(a_j)}.
\]
So, a representation of a quiver is just a diagram of vector spaces and operators such that the quiver carries the information  about the diagram structure.
This situation is shown in the following diagram, the grid shape serves as an example again.

\begin{equation}
\def\plotSA #1 #2 #3 #4 /{ \arrow <1mm> [.25,.75] from #1 #2 to #3 #4 }
\def\SC #1 #2 {\setcoordinatesystem units <.5mm,.5mm> point at {-#1} {-#2} }
\label{abstract general grid}
\beginpicture
\SC  00  00
\setplotarea x from -90 to +70, y from -70 to +70

\SC -60 -60 \put { $V_{n-1,m-1}$ } [] at 00 00
\SC -60  00 \put { $V_{n-1,m  }$ } [] at 00 00
\SC -60  60 \put { $V_{n-1,m+1}$ } [] at 00 00
\SC  00 -60 \put { $V_{n  ,m-1}$ } [] at 00 00
\SC  00  00 \put { $V_{n  ,m  }$ } [] at 00 00
\SC  00  60 \put { $V_{n  ,m+1}$ } [] at 00 00
\SC +60 -60 \put { $V_{n+1,m-1}$ } [] at 00 00
\SC  60  00 \put { $V_{n+1,m  }$ } [] at 00 00
\SC +60 +60 \put { $V_{n+1,m+1}$ } [] at 00 00

\SC  00  00 \plotSA +20   +01   +40   +01   / \put {\tiny $A^+_{n+1,m  } $} [b ] at +30  02
\SC  00  60 \plotSA +20   +01   +40   +01   / \put {\tiny $A^+_{n+1,m+1} $} [b ] at +30  02
\SC -60  60 \plotSA +20   +01   +40   +01   / \put {\tiny $A^+_{n  ,m+1} $} [b ] at +30  02
\SC -60  00 \plotSA +20   +01   +40   +01   / \put {\tiny $A^+_{n  ,m  } $} [b ] at +30  02
\SC -60 -60 \plotSA +20   +01   +40   +01   / \put {\tiny $A^+_{n  ,m-1} $} [b ] at +30  02
\SC  00 -60 \plotSA +20   +01   +40   +01   / \put {\tiny $A^+_{n+1,m-1} $} [b ] at +30  02

\SC  00  00 \plotSA -20   -01   -40   -01   / \put {\tiny $A^-_{n  ,m  } $} [t ] at -30 -02
\SC  60  00 \plotSA -20   -01   -40   -01   / \put {\tiny $A^-_{n+1,m  } $} [t ] at -30 -02
\SC  60  60 \plotSA -20   -01   -40   -01   / \put {\tiny $A^-_{n+1,m+1} $} [t ] at -30 -02
\SC  00  60 \plotSA -20   -01   -40   -01   / \put {\tiny $A^-_{n  ,m+1} $} [t ] at -30 -02
\SC  00 -60 \plotSA -20   -01   -40   -01   / \put {\tiny $A^-_{n  ,m-1} $} [t ] at -30 -02
\SC  60 -60 \plotSA -20   -01   -40   -01   / \put {\tiny $A^-_{n+1,m-1} $} [t ] at -30 -02

\SC  00  00 \plotSA -01   +20   -01   +40   / \put {\tiny $B^+_{n  ,m+1} $} [r ] at -02 +30
\SC  60  00 \plotSA -01   +20   -01   +40   / \put {\tiny $B^+_{n+1,m+1} $} [r ] at -02 +30
\SC -60  00 \plotSA -01   +20   -01   +40   / \put {\tiny $B^+_{n-1,m+1} $} [r ] at -02 +30
\SC -60 -60 \plotSA -01   +20   -01   +40   / \put {\tiny $B^+_{n-1,m  } $} [r ] at -02 +30
\SC  00 -60 \plotSA -01   +20   -01   +40   / \put {\tiny $B^+_{n  ,m  } $} [r ] at -02 +30
\SC  60 -60 \plotSA -01   +20   -01   +40   / \put {\tiny $B^+_{n+1,m  } $} [r ] at -02 +30

\SC  00  00 \plotSA +01   -20   +01   -40   / \put {\tiny $B^-_{n  ,m  } $} [l ] at +02 -30
\SC  60  00 \plotSA +01   -20   +01   -40   / \put {\tiny $B^-_{n+1,m  } $} [l ] at +02 -30
\SC  60  60 \plotSA +01   -20   +01   -40   / \put {\tiny $B^-_{n+1,m+1} $} [l ] at +02 -30
\SC  00  60 \plotSA +01   -20   +01   -40   / \put {\tiny $B^-_{n  ,m+1} $} [l ] at +02 -30
\SC -60  60 \plotSA +01   -20   +01   -40   / \put {\tiny $B^-_{n-1,m+1} $} [l ] at +02 -30
\SC -60  00 \plotSA +01   -20   +01   -40   / \put {\tiny $B^-_{n-1,m  } $} [l ] at +02 -30

\SC  00  00 \plotSA +19.5 +20.5 +39.5 +40.5 / \put {\tiny $P^+_{n+1,m+1} $} [rt] at +33  40
\SC -60 -60 \plotSA +19.5 +20.5 +39.5 +40.5 / \put {\tiny $P^+_{n  ,m  } $} [rt] at +33  40

\SC  00  00 \plotSA -19.5 +20.5 -39.5 +40.5 / \put {\tiny $Q^+_{n-1,m+1} $} [lt] <-1mm,+1mm> at -34  40
\SC  60 -60 \plotSA -19.5 +20.5 -39.5 +40.5 / \put {\tiny $Q^+_{n  ,m  } $} [lt] <-1mm,+1mm> at -34  40

\SC  00  00 \plotSA -19.5 -20.5 -39.5 -40.5 / \put {\tiny $P^-_{n  ,m  } $} [lb] at -34 -40
\SC  60  60 \plotSA -19.5 -20.5 -39.5 -40.5 / \put {\tiny $P^-_{n+1,m+1} $} [lb] at -34 -40

\SC  00  00 \plotSA +19.5 -20.5 +39.5 -40.5 / \put {\tiny $Q^-_{n  ,m  } $} [rb] <+1mm,-1mm> at +34 -40
\SC -60  60 \plotSA +19.5 -20.5 +39.5 -40.5 / \put {\tiny $Q^-_{n-1,m+1} $} [rb] <+1mm,-1mm> at +34 -40
\endpicture
\end{equation}

A \emph{homomorphism} of two representations $(V_i,A_j)$ and $(W_i,B_j)$ is just a collection of linear operators $(\phi_i:V_i \to W_i)_{i\in I}$ that commute with the arrow operators $A_j$ in the right way: For any arrow $a_j$ in the quiver we have
\[ B_j \, \phi_{s(j)} \eq \phi_{t(j)} \, A_j \q: \q V_{s(j)} \to W_{t(j)}.
\]
To each path $a= (a_m \cdots a_1)$ we can assign the operator product $A = A_m \cdots A_1$.
The path product corresponds to the operator product. An operator assigned to a circuit is a
vector space endomorphism. We will call it a \emph{circuit endomorphism}.

\mip
Now we consider a fixed representation $\rho$ of a quiver.
We say that a circuit endomorphism $A_j:V_{s(j)} \to V_{s(j)}$ \emph{acts as a scalar} or it is a \emph{\textbf{S}calar \textbf{I}ntrinsic \textbf{E}ndomorphism (SIE)}, if it is a scalar multiple of the identity mapping on $V_{s(j)}$
\[ A_j \eq \aa \cd \id_{V_{s(j)}} \eq \aa \q\q \5{for some} \aa\in\C.
\]
Here and from now on we identify a scalar multiple of the identity operator with the scalar itself.
\mip
The quiver representation $\rho$ will be called an \emph{SIE representation}, if all circuit endomorphisms arising in it are SIE.
The following lemmas collect some simple linear algebraic observations about SIEs.

\begin{lemma}[Narrow circuit endomorphisms]
\label{scalars equal}
Let $a$ be a path in the quiver and $A = \rho(a), A^\opp = \rho(a^{\opp})$ the corresponding operators in both directions
\begin{eqnarray*}
\beginpicture
\SC  00 00 \setplotarea x from -45 to +45, y from -03 to +03
\SC -14 00 \put {$V_{s(a)}$} [] at 00 00
\SC  00 00 \arrow <1mm> [.25,.75] from -05 01 to +05  01 \put {\tiny $A     $} [b] at 00  02
           \arrow <1mm> [.25,.75] from +05 00 to -05  00 \put {\tiny $A^\opp$} [t] at 00 -01
\SC +14 00 \put {$V_{t(a)}.$} [] at 00 00
\endpicture
\end{eqnarray*}

\begin{itemize}
\item[(i)] If the two narrow circuit endomorphisms $A^\opp A$ and $A A^\opp$ act as scalars
\begin{eqnarray*}
&& A^\opp A \eq \aa,  \q\q A A^\opp \eq \wt\aa,
\end{eqnarray*}
then these scalars are equal: $\aa = \wt\aa$.

\mip The example
\beginpicture
\SC  00 00 \setplotarea x from -13 to +13, y from +02 to 05
\SC -10 00 \put {$\R$} [] at 00 01
\SC  00 00 \arrow <1mm> [.25,.75] from -05 02 to +05  02 \put {\tiny $ \id \x 0 $} [b] at 00  03
           \arrow <1mm> [.25,.75] from +05 01 to -05  01 \put {\tiny $ \mathop{\rm pr}_1  $} [t] at 00  00
\SC +10 00 \put {$\R^2$} [] at 00 01
\endpicture
shows that the two conditions do not imply each other.
\mip
\item[(iii)]
Let $\aa\in\C\setminus\{ 0 \}$ be a nonzero number. The following statements about the corresponding circuit endomorphisms are equivalent
\begin{itemize}
\item[] $A^\opp A      = \aa \; \id_V $ and $A$ surjective.
\item[] $A^\opp A      = \aa \; \id_V $ and $A^\opp$ injective.
\item[] $A      A^\opp = \aa \; \id_W $ and $A^\opp$ surjective.
\item[] $A      A^\opp = \aa \; \id_W $ and $A$ injective.
\item[] $A^\opp A      = \aa \; \id_V $ and $A A^\opp = \aa \; \id_W $. $V$ and $W$ are isomorphic.
\end{itemize}
\item[(iv)] If all short circuit endomorphisms act as scalars then all circuit endomorphisms act as scalars.
\end{itemize}
\end{lemma}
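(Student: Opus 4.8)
The plan is to prove the statement by induction on the length of the backtracking path of a narrow circuit -- the title of the lemma makes clear that it is the narrow circuit endomorphisms that are at issue -- peeling off one arrow pair at a time, each pair contributing a short circuit endomorphism.

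First, record what the hypothesis gives: for every arrow $a$ occurring in the quiver, $\rho(a^\opp a)\eq\la_a\cd\id$ for some $\la_a\in\C$ (and, by part (i), also $\rho(a a^\opp)\eq\la_a\cd\id$, though we shall not need this). The one structural fact used repeatedly is that a scalar multiple of the identity is central, hence may be extracted from any position inside a composition of operators.

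Now let $c$ be a narrow circuit, written $c\eq b^\opp b$ with $b\eq(b_k\cdots b_1)$ a path of length $k$, and induct on $k$. For $k\le 1$ the circuit $c$ is trivial or short, so $\rho(c)$ is already a scalar ($\id$, resp.\ $\la_{b_1}$). For $k\ge 2$, expanding the path product $b^\opp b\eq(b_1^\opp\cdots b_k^\opp\,b_k\cdots b_1)$ into operators gives
\[
  \rho(c)\eq\rho(b_1^\opp)\cdots\rho(b_{k-1}^\opp)\,\rho(b_k^\opp)\,\rho(b_k)\,\rho(b_{k-1})\cdots\rho(b_1),
\]
and the central factor $\rho(b_k^\opp)\rho(b_k)=\rho(b_k^\opp b_k)$ is the endomorphism of the short circuit $b_k^\opp b_k$, hence equals $\la_{b_k}\cd\id$. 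Extracting this scalar,
\[
  \rho(c)\eq\la_{b_k}\cd\rho\bigl((b_{k-1}\cdots b_1)^\opp\,(b_{k-1}\cdots b_1)\bigr),
\]
whose remaining factor is the endomorphism of a narrow circuit with backtracking path of length $k-1$; by the inductive hypothesis it is a scalar, so $\rho(c)\eq\la_{b_1}\la_{b_2}\cdots\la_{b_k}$ is a scalar.

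The step I expect to be the genuine obstacle is \emph{not} this induction, which is routine, but the question of whether ``all circuit endomorphisms'' is meant to include wide circuits. It cannot be, in full generality: a wide circuit runs around a genuine cycle of the underlying graph, and already for a four-cycle one can arrange that all short circuit endomorphisms are scalars while the endomorphism around the cycle is not. So the content of (iv) is precisely the narrow statement proved above; to handle an arbitrary circuit one would first have to rewrite its endomorphism as a composition of narrow circuit endomorphisms, and it is there that the further hypotheses on $\rho$ (the ladder commutator relations and the ``diagram commutativity up to scalars'' of the later theorems) enter, after which the argument above applies unchanged.
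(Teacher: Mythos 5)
Your induction on the length of the backtracking path, extracting the innermost short circuit endomorphism $A_m^\opp A_m$ as a central scalar and recursing on the remaining shorter narrow circuit, is exactly the argument the paper gives for (iv). Your closing caveat is also on target: the paper's own proof only ever treats circuits of the form $a^\opp a$, i.e.\ narrow ones, and wide circuits are handled separately (Lemma~\ref{Wide circuit endomorphisms}) under additional hypotheses, so (iv) should indeed be read as a statement about narrow circuits.
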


\begin{proof}
The proof of (iii) is simply based on the fact that $BC = \aa \cd \id$ implies $B$ injective and $C$ surjective.
\mip
We are going to show (iv):
For a path $a = (a_m \cdots a_1)$ of arrows and its operator counterpart $A = A_m \cdots A_1$ we have the identity
\begin{eqnarray*}
A^\opp A &=& A_1^\opp \cdots \ub{ A_{m-1}^\opp \ub{A_m^\opp A_m} A_{m-1} } \cdots A_1 \eq \aa_m \cdots \aa_m
\end{eqnarray*}
Inductively --- starting with the innermost circuit operator $A_m^\opp A_m$ --- all the circuits act as scalars and commute with the outer operators.
\end{proof}

\begin{lemma}[Wide circuit endomorphisms]
\label{Wide circuit endomorphisms}
Consider the following part of a representation $(U_{nm})$ of a grid.
\begin{equation}
\def\plotSA #1 #2 #3 #4 /{ \arrow <1mm> [.25,.75] from #1 #2 to #3 #4 }
\def\SC #1 #2 {\setcoordinatesystem units <.5mm,.5mm> point at {-#1} {-#2} }
\label{}
\beginpicture
\SC  00  00 \setplotarea x from -10 to +70, y from -10 to +70
\SC  00  00 \put { $V_{n  ,m  }$ } [] at 00 00
\SC  00  60 \put { $V_{n  ,m+1}$ } [] at 00 00
\SC  60  00 \put { $V_{n+1,m  }$ } [] at 00 00
\SC +60 +60 \put { $V_{n+1,m+1}$ } [] at 00 00

\SC  00  00 \plotSA +20   +01   +40   +01   / \put {\tiny $A^+_{n+1,m  } $} [b ] at +30  02
\SC  00  60 \plotSA +20   +01   +40   +01   / \put {\tiny $A^+_{n+1,m+1} $} [b ] at +30  02

\SC  60  00 \plotSA -20   -01   -40   -01   / \put {\tiny $A^-_{n+1,m  } $} [t ] at -30 -02
\SC  60  60 \plotSA -20   -01   -40   -01   / \put {\tiny $A^-_{n+1,m+1} $} [t ] at -30 -02

\SC  00  00 \plotSA -01   +20   -01   +40   / \put {\tiny $B^+_{n  ,m+1} $} [r ] at -02 +30
\SC  60  00 \plotSA -01   +20   -01   +40   / \put {\tiny $B^+_{n+1,m+1} $} [r ] at -02 +30

\SC  60  60 \plotSA +01   -20   +01   -40   / \put {\tiny $B^-_{n+1,m+1} $} [l ] at +02 -30
\SC  00  60 \plotSA +01   -20   +01   -40   / \put {\tiny $B^-_{n  ,m+1} $} [l ] at +02 -30
\endpicture
\end{equation}
Assume that the eight narrow circuit endomorphisms act as scalars as follows:
\begin{eqnarray}
\bq\bq\bq 
\begin{array}{r @{\;\,} c @{\;\,} l @{\q\q\q} r @{\;\,} c @{\;\,} l }
\DS A^-_{n+1,m  } A^+_{n+1,m  } & = & \DS \aa_{n+1,m  } &
\DS A^+_{n+1,m  } A^-_{n+1,m  } & = & \DS \aa_{n+1,m  } \\[1.5ex]
\DS B^-_{n+1,m+1} B^+_{n+1,m+1} & = & \DS \ba_{n+1,m+1} &
\DS B^+_{n+1,m+1} B^-_{n+1,m+1} & = & \DS \ba_{n+1,m+1} \\[1.5ex]
\DS A^+_{n+1,m+1} A^-_{n+1,m+1} & = & \DS \aa_{n+1,m+1} &
\DS A^-_{n+1,m+1} A^+_{n+1,m+1} & = & \DS \aa_{n+1,m+1} \\[1.5ex]
\DS B^+_{n  ,m+1} B^-_{n  ,m+1} & = & \DS \ba_{n  ,m+1} &
\DS B^-_{n  ,m+1} B^+_{n  ,m+1} & = & \DS \ba_{n  ,m+1}
\end{array}
\label{eight narrow circuit conditions}
\end{eqnarray}
Let $\ga_{nm}, \, \da_{nm}$ be two more numbers such that their product equals the product of the four numbers
\begin{eqnarray}
\ba_{n  ,m+1} \cd \aa_{n+1,m+1} \cd \ba_{n+1,m+1}  \cd \aa_{n+1,m  } &=& \ga_{nm} \cd \da_{nm}.
\label{wide circuit condition}
\end{eqnarray}

Then the following sixteen equations are equivalent.
\begin{small}
\begin{eqnarray*}
\bq\bq\bq
\begin{array}[c]{ r @{\;} l @{\;}c@{\;} l @{\;} l @{\;}c@{\;} l }
\circlearrowleft: & V_{n  ,m  }&\to&V_{n  ,m  } &\q B^-_{n  ,m+1} \cd A^-_{n+1,m+1} \cd B^+_{n+1,m+1} \cd A^+_{n+1,m  } &\eq& \ga_{nm} \\
\circlearrowleft: & V_{n+1,m  }&\to&V_{n+1,m  } &\q A^+_{n+1,m  } \cd B^-_{n  ,m+1} \cd A^-_{n+1,m+1} \cd B^+_{n+1,m+1} &\eq& \ga_{nm} \\
\circlearrowleft: & V_{n+1,m+1}&\to&V_{n+1,m+1} &\q B^+_{n+1,m+1} \cd A^+_{n+1,m  } \cd B^-_{n  ,m+1} \cd A^-_{n+1,m+1} &\eq& \ga_{nm} \\
\circlearrowleft: & V_{n  ,m+1}&\to&V_{n  ,m+1} &\q A^-_{n+1,m+1} \cd B^+_{n+1,m+1} \cd A^+_{n+1,m  } \cd B^-_{n  ,m+1} &\eq& \ga_{nm} \\
\circlearrowright:& V_{n  ,m  }&\to&V_{n  ,m  } &\q A^-_{n+1,m  } \cd B^-_{n+1,m+1} \cd A^+_{n+1,m+1} \cd B^+_{n  ,m+1} &\eq& \da_{nm} \\
\circlearrowright:& V_{n  ,m+1}&\to&V_{n  ,m+1} &\q B^+_{n  ,m+1} \cd A^-_{n+1,m  } \cd B^-_{n+1,m+1} \cd A^+_{n+1,m+1} &\eq& \da_{nm} \\
\circlearrowright:& V_{n+1,m+1}&\to&V_{n+1,m+1} &\q A^+_{n+1,m+1} \cd B^+_{n  ,m+1} \cd A^-_{n+1,m  } \cd B^-_{n+1,m+1} &\eq& \da_{nm} \\
\circlearrowright:& V_{n+1,m  }&\to&V_{n+1,m  } &\q B^-_{n+1,m+1} \cd A^+_{n+1,m+1} \cd B^+_{n  ,m+1} \cd A^-_{n+1,m  } &\eq& \da_{nm} \\[1.5ex]
\nearrow:& V_{n  ,m  }&\to&V_{n+1,m+1} &\mc{3}{l}{ \q A^+_{n+1,m+1} \cd B^+_{n  ,m+1}   \q\eq
                                          \f{\aa_{n+1,m+1} \cd \ba_{n  ,m+1} }{\ga_{nm}} \q   B^+_{n+1,m+1} \cd A^+_{n+1,m  } } \\[1.2ex]
\nwarrow:& V_{n+1,m  }&\to&V_{n  ,m+1} &\mc{3}{l}{ \q B^+_{n  ,m+1} \cd A^-_{n+1,m }   \q\q\eq
                                          \f{\aa_{n+1,m  } \cd \ba_{n  ,m+1} }{\ga_{nm}} \q\q A^-_{n+1,m+1} \cd B^+_{n+1,m+1} } \\[1.2ex]
\swarrow:& V_{n+1,m+1}&\to&V_{n  ,m  } &\mc{3}{l}{ \q A^-_{n+1,m  } \cd B^-_{n+1,m+1}   \q\eq
                                          \f{\aa_{n+1,m  } \cd \ba_{n+1,m+1} }{\ga_{nm}} \q   B^-_{n  ,m+1} \cd A^-_{n+1,m+1} } \\[1.2ex]
\searrow:& V_{n  ,m+1}&\to&V_{n+1,m  } &\mc{3}{l}{ \q B^-_{n+1,m+1} \cd A^+_{n+1,m+1}   \eq
                                          \f{\aa_{n+1,m+1} \cd \ba_{n+1,m+1} }{\ga_{nm}} \;   A^+_{n+1,m  } \cd B^-_{n  ,m+1} } \\[1.5ex]
\nearrow:& V_{n  ,m  }&\to&V_{n+1,m+1} &\mc{3}{l}{ \q B^+_{n+1,m+1} \cd A^+_{n+1,m  }   \q\eq
                                          \f{\aa_{n+1,m  } \cd \ba_{n+1,m+1} }{\da_{nm}} \q   A^+_{n+1,m+1} \cd B^+_{n  ,m+1} } \\[1.2ex]
\nwarrow:& V_{n+1,m  }&\to&V_{n  ,m+1} &\mc{3}{l}{ \q A^-_{n+1,m+1} \cd B^+_{n+1,m+1}   \eq
                                          \f{\aa_{n+1,m+1} \cd \ba_{n+1,m+1} }{\da_{nm}} \;   B^+_{n  ,m+1} \cd A^-_{n+1,m  } } \\[1.2ex]
\swarrow:& V_{n+1,m+1}&\to&V_{n  ,m  } &\mc{3}{l}{ \q B^-_{n  ,m+1} \cd A^-_{n+1,m+1}   \q\eq
                                          \f{\aa_{n+1,m+1} \cd \ba_{n  ,m+1} }{\da_{nm}} \q   A^-_{n+1,m  } \cd B^-_{n+1,m+1} } \\[1.2ex]
\searrow:& V_{n  ,m+1}&\to&V_{n+1,m  } &\mc{3}{l}{ \q A^+_{n+1,m  } \cd B^-_{n  ,m+1}   \q\q\eq
                                          \f{\aa_{n+1,m  } \cd \ba_{n  ,m+1} }{\da_{nm}} \q\q B^-_{n+1,m+1} \cd A^+_{n+1,m+1} }
\end{array}
\end{eqnarray*}
\end{small}
The first four equations state that the wide circuit endomorphisms along the square in counterclockwise direction act as the same scalar.
The next four equations contain the same statement for the clockwise direction.
The other eight equations describe the ``up to scalar commutativity'' of the diagram.
\end{lemma}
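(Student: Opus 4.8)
The plan is to reduce all sixteen displayed operator equations to the single equation ``the counterclockwise loop at $V_{nm}$ equals $\ga_{nm}\cd\id$'', exploiting that under the hypotheses every edge arrow of the square is invertible.

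First, the eight ``commutativity'' equations only make sense when $\ga_{nm}\neq 0$ and $\da_{nm}\neq 0$, since these scalars occur in denominators; this is thus the effective standing hypothesis. By (\ref{wide circuit condition}) the product $\ba_{n,m+1}\aa_{n+1,m+1}\ba_{n+1,m+1}\aa_{n+1,m}=\ga_{nm}\da_{nm}$ is then nonzero, so each of $\aa_{n+1,m}$, $\aa_{n+1,m+1}$, $\ba_{n,m+1}$, $\ba_{n+1,m+1}$ is nonzero. Feeding the eight relations (\ref{eight narrow circuit conditions}) into Lemma~\ref{scalars equal}(iii), each of the four edge operators of the square is an isomorphism whose opposite is its inverse up to the appropriate scalar, e.g. $A^-_{n+1,m}\eq\aa_{n+1,m}(A^+_{n+1,m})^{-1}$, and likewise on the other three edges.

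Abbreviate $a\eq A^+_{n+1,m}$, $b\eq B^+_{n+1,m+1}$, $c\eq A^-_{n+1,m+1}$, $d\eq B^-_{n,m+1}$ and put $W\defeq dcba:V_{nm}\to V_{nm}$, the loop of the first equation. I will show each of the sixteen equations is equivalent to
\[ W \eq \ga_{nm}\cd\id_{V_{nm}}. \]
Equations~1--4 are $W$ together with its three cyclic rotations $adcb$, $badc$, $cbad$; since $a$, $b\,a$, $c\,b\,a$ are isomorphisms, each rotation is a conjugate of $W$ by one of them, and conjugating by an isomorphism preserves the property of being $\ga_{nm}\cd\id$, so all four hold or none. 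For equations~5--8, substituting the inverse relations from the previous paragraph into the clockwise loop at $V_{nm}$ turns it into $\aa_{n+1,m}\ba_{n+1,m+1}\aa_{n+1,m+1}\ba_{n,m+1}\cd W^{-1}$, which by (\ref{wide circuit condition}) equals $\da_{nm}\cd\id$ precisely when $W=\ga_{nm}\cd\id$; the other three clockwise loops are again cyclic rotations. Finally, each of the eight equations~9--16 equates a two--arrow composite running one way between a pair of opposite corners of the square with a scalar multiple of the two--arrow composite running the other way; replacing every $A^-$ and $B^-$ by the corresponding inverse and cancelling the (nonzero) Greek prefactors, each such equation becomes one of the loop equations~1--8 (for the four carrying $\da_{nm}$ in the denominator one also uses $\ga_{nm}\da_{nm}=\ba_{n,m+1}\aa_{n+1,m+1}\ba_{n+1,m+1}\aa_{n+1,m}$ to remove $\da_{nm}$), hence is equivalent to $W=\ga_{nm}\cd\id$.

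Since all sixteen are equivalent to $W=\ga_{nm}\cd\id_{V_{nm}}$, they are mutually equivalent. There is no deep obstacle: the one thing to notice is that the very form of the ``commutativity'' equations forces $\ga_{nm},\da_{nm}\neq 0$, hence all four edge scalars $\neq 0$, hence — by Lemma~\ref{scalars equal}(iii) — all four edge operators invertible; everything after that is bookkeeping. The nonvanishing is genuinely needed: allowing a zero scalar breaks the equivalence. The only place demanding care is tracking the four vector spaces, so that each ``rotation'' really is conjugation by an isomorphism between the intended spaces.
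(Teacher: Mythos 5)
Your proof is correct and is essentially the paper's own argument in different clothing: where you invert each edge operator (via Lemma~\ref{scalars equal}(iii), since both narrow circuits on each edge act as the same nonzero scalar) and reduce all sixteen equations to the single counterclockwise loop equation at $V_{nm}$, the paper multiplies by the opposite arrows, cancels the (implicitly nonzero) narrow-circuit scalars, and invokes cyclic commutation for the remaining cases. Your explicit remark that $\ga_{nm},\da_{nm}\neq 0$ --- and hence, by (\ref{wide circuit condition}), all four edge scalars --- is a genuine standing hypothesis is a point the paper leaves tacit but equally depends on, since its cancellation step fails for a zero scalar.
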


\begin{proof}
Just multiply the first equation with $A^-_{n+1,m}$ from the right and with $A^+_{n+1,m}$ from the left.
Then the second identity in (\ref{eight narrow circuit conditions}) proves the second equation.
\mip
Multiply the first equation with the operator of the fifth line from the left.
Then the left side of the identities in (\ref{eight narrow circuit conditions}) and the identity (\ref{wide circuit condition}) show the
fifth equation.
\mip
Multiply the first equation with the left operator of the ninth line from the left.
Then the fifth and seventh identity in (\ref{eight narrow circuit conditions}) show the ninth equation.
\mip
The equivalence of the ninth and tenth equation follows directly from (\ref{wide circuit condition}).
\mip
Cyclic commutation shows all the other implications.
\end{proof}

\section{Ladders}

We now consider a single (finite or infinite) path in the quiver together with its opposite path.
A representation of a path, called a \emph{ladder}, is illustrated by the diagram
\begin{equation}
\label{ladder}
\beginpicture
\SC  00 00 \setplotarea x from -65 to +65, y from -03 to 03
\SC -42 00 \put {$\dots$}   [] at 00 00
\SC +42 00 \put {$\dots$}   [] at 00 00
\SC -28 00 \put {$V_{n-1}$} [] at 00 00
\SC  00 00 \put {$V_{n  }$} [] at 00 00
\SC +28 00 \put {$V_{n+1}$} [] at 00 00
\SC -14 00 \arrow <1mm> [.25,.75] from -05 01 to +05  01 \put {\tiny $A^+_n$} [b] at 00  02
           \arrow <1mm> [.25,.75] from +05 00 to -05  00 \put {\tiny $A^-_n$} [t] at 00 -01
\SC +14 00 \arrow <1mm> [.25,.75] from -05 01 to +05  01 \put {\tiny $A^+_{n+1}$} [b] at 00  02
           \arrow <1mm> [.25,.75] from +05 00 to -05  00 \put {\tiny $A^-_{n+1}$} [t] at 00 -01
\endpicture
\end{equation}

Ladders appear abundantly in quantum mechanics and quantum field theory (see \cite{Dong}).
In \cite{Haran} they are used for studying Markov chains on trees.
A basic study of this notion and many examples can be found in \cite{HilgerCL}.
The following theorem from ladder theory is taken from \cite{HilgerCL},
the setting here is a little bit different, mainly due to other index conventions.
\mip
For a given general ladder we define the narrow circuit operators
\begin{eqnarray*}
A_n^\Lq &\defeq& A_{n  }^+ A_{n  }^-, \q\q\q
A_n^\Rq \defeq   A_{n+1}^- A_{n+1}^+.
\end{eqnarray*}

\begin{lemma}[Scalar commutators] \
\label{Lemma: Scalar commutators}
Assume that there is a sequence $(\aa_n)_{n\in\Z}$ of (real or complex) numbers, such that
\begin{eqnarray}
\label{commutator condition} A_n^\Rq - A_n^\Lq \eq A_{n+1}^- A_{n+1}^+ - A_{n}^+ A_{n}^- &=& \aa_{n+1} - \aa_{n} \q \5{for all} n \in \Z
\end{eqnarray}
Then the ladder
\begin{eqnarray}
\label{abs lad 3}
\beginpicture
\SC  00 00 \setplotarea x from -50 to +50, y from -03 to 03
\SC -42 00 \put {$ \dots $} [] at 00 00
\SC +42 00 \put {$ \dots $} [] at 00 00
\SC -28 00 \put {$\cE_{n-1}$} [] at 00 00
\SC  00 00 \put {$\cE_{n  }$} [] at 00 00
\SC +28 00 \put {$\cE_{n+1}$} [] at 00 00
\SC -14 00 \arrow <1mm> [.25,.75] from -05 01 to +05  01 \put {\tiny $A^+_ n   $} [b] at 00  02
           \arrow <1mm> [.25,.75] from +05 00 to -05  00 \put {\tiny $A^-_ n   $} [t] at 00 -01
\SC +14 00 \arrow <1mm> [.25,.75] from -05 01 to +05  01 \put {\tiny $A^+_{n+1}$} [b] at 00  02
           \arrow <1mm> [.25,.75] from +05 00 to -05  00 \put {\tiny $A^-_{n+1}$} [t] at 00 -01
\endpicture
\end{eqnarray}
with eigenspaces
\begin{eqnarray*}
\label{ladder eigenspaces}
\cE_n
&\defeq&
\eig ( A_n^\Lq , \aa_{n  } ) \eq \eig ( A_n^\Rq , \aa_{n+1} )
\end{eqnarray*}
is a well defined SIE subladder of (\ref{ladder}).
\end{lemma}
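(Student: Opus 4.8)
I would verify three things in turn: that the two eigenspace descriptions of $\cE_n$ in the statement actually coincide (so that the definition is meaningful), that the ladder operators $A^+_n, A^-_n$ restrict to maps between these eigenspaces (so that one obtains an honest subrepresentation of the path), and finally that this subrepresentation is SIE.

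\textbf{Step 1: the defining equality.} The commutator condition (\ref{commutator condition}) says exactly that $A_n^\Rq = A_n^\Lq + (\aa_{n+1}-\aa_n)\,\id_{V_n}$ as operators on $V_n$. Hence, for $v\in V_n$, one has $A_n^\Lq v = \aa_n v$ if and only if $A_n^\Rq v = \aa_n v + (\aa_{n+1}-\aa_n)v = \aa_{n+1}v$. This is precisely the claimed identity $\eig(A_n^\Lq,\aa_n)=\eig(A_n^\Rq,\aa_{n+1})$; write $\cE_n$ for this common subspace of $V_n$.

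\textbf{Step 2: the subladder is well defined.} Here I would show $A^+_n(\cE_{n-1})\subseteq\cE_n$ and $A^-_n(\cE_n)\subseteq\cE_{n-1}$. Recall $\cE_{n-1}=\eig(A_{n-1}^\Rq,\aa_n)$ with $A_{n-1}^\Rq = A^-_n A^+_n$. For $v\in\cE_{n-1}$ we get $A_n^\Lq(A^+_n v)=A^+_n A^-_n A^+_n v = A^+_n(\aa_n v)=\aa_n\,A^+_n v$, so $A^+_n v\in\eig(A_n^\Lq,\aa_n)=\cE_n$. Symmetrically, for $w\in\cE_n=\eig(A_n^\Lq,\aa_n)$ we get $A_{n-1}^\Rq(A^-_n w)=A^-_n A^+_n A^-_n w = A^-_n(\aa_n w)=\aa_n\,A^-_n w$, so $A^-_n w\in\eig(A_{n-1}^\Rq,\aa_n)=\cE_{n-1}$. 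Thus the restrictions of the $A^\pm_n$ form a subladder of (\ref{ladder}) carried by the spaces $\cE_n$.

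\textbf{Step 3: the subladder is SIE.} On the subladder, the short circuit endomorphism at $\cE_n$ coming from going out and back along $a^-_n$ is $(A^+_n A^-_n)|_{\cE_n} = A_n^\Lq|_{\cE_n} = \aa_n\,\id_{\cE_n}$, and the one at $\cE_{n-1}$ coming from $a^+_n$ is $(A^-_n A^+_n)|_{\cE_{n-1}} = A_{n-1}^\Rq|_{\cE_{n-1}} = \aa_n\,\id_{\cE_{n-1}}$, both by the very definition of the eigenspaces. Since every arrow of the path quiver is one of the $a^+_n$ (equivalently, one of the $a^-_n$ in the opposite direction), these exhaust the short circuits. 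Hence all short circuit endomorphisms of the subladder act as scalars, and Lemma~\ref{scalars equal}(iv) yields that \emph{every} circuit endomorphism acts as a scalar; that is, the subladder is an SIE representation, which completes the proof.

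\textbf{Expected main obstacle.} There is no substantial difficulty here; the proof is essentially three two-line eigenvector computations. The only point that needs care is the index bookkeeping: pairing $A_n^\Lq$ with the eigenvalue $\aa_n$ and the space $V_n$, pairing $A_n^\Rq$ with $\aa_{n+1}$, tracking the shift $n\leftrightarrow n-1$ when restricting $A^\pm_n$, and making sure Step 3 really lists \emph{all} short circuits so that Lemma~\ref{scalars equal}(iv) is applicable.
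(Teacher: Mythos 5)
Your proposal is correct and follows essentially the same route as the paper: the commutator condition makes the two eigenspace descriptions agree, and the associativity computations $A_n^\Lq A_n^+ = A_n^+ A_{n-1}^\Rq$ and $A_{n-1}^\Rq A_n^- = A_n^- A_n^\Lq$ show the arrows restrict to the $\cE_n$. Your Step 3, which explicitly records that the short circuits act as $\aa_n$ by definition and then invokes Lemma~\ref{scalars equal}(iv) to get the full SIE property, is left implicit in the paper, so your write-up is if anything slightly more complete.
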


\begin{proof}
The identity (\ref{commutator condition}) ensures that the two different expressions in the definition of $\cE_n$ are equivalent.
Now let $v \in \cE_n$. Then
\begin{eqnarray*}
&& (A_{n+1}^+ A_{n+1}^- ) A_{n+1}^+  v \eq  A_{n+1}^+ ( A_{n+1}^- A_{n+1}^+ ) v \\ 
&=& A_{n+1}^+ \aa_{n+1} v \eq \aa_{n+1} A^+_{n+1} v,
\end{eqnarray*}
so $A_{n+1}^+  v \in \cE_{n+1}$.
On the other side we have
\begin{eqnarray*}
&& (A_{n}^- A_{n}^+ ) A_n^- v \eq A_n^- ( A_n^+ A_n^- ) v  \eq  A_n^- \aa_n v \eq \aa_n A_n^- v,
\end{eqnarray*}
but this is $A_n^- v \in \cE_{n-1}$.
\end{proof}

\section{The Weyl algebra}
The Weyl algebra is generated as an associative unital $\C$ algebra by the two operators $D,X$ subject to the canonical commutator relation
\[ [D,X] = 1.
\]
The standard representation is by operators acting on $\cC^\8(\R)$ via differentiation (D) and multiplication with the variable (X), respectively.
If $p$ is a polynomial over $\C$ , then
\[ [D,p(X)] = p'(X), \q\q [p(D),X] = p'(D).
\]
In the Weyl algebra we additionally define the operators
\begin{eqnarray*}
C &\defeq& D-1 \\
\cD_n &\defeq& 1 + D + D^2 + \dots + D^n
\end{eqnarray*}
Then
\begin{eqnarray*}
&& [C,D] = 0, \q\q [C,X] = 1, \q\q  C \cD_n = \cD_n C \eq D^{n+1} - 1.
\end{eqnarray*}

\section{The Laguerre grid}

In this section we assume that the operator $X$ in the Weyl algebra is invertible. On the algebraic level this can be achieved by adjoining
$X^{-1}$ to the Weyl algebra. With respect to the standard representation we have to restrict the domain of the relevant functions, to
$\R^+$, e.g.
\mip
Now, for $n \in \N_0, k \in\Z$ define the (second order differential) \emph{Laguerre operator}
\begin{eqnarray*}
H_{nk} &\defeq& H_{nk}^\7{Lag} \defeq XCD + (k+1)D + n \eq CXD + k D + n
\end{eqnarray*}
and --- according to some fixed representation of the Weyl algebra --- the spaces
\begin{eqnarray*}
U_{nk} \defeq \ker H_{nk} \,\cap\, \ker D^{n+1}.
\end{eqnarray*}

The diagram below shows locally the so-called \emph{Laguerre grid}. The global Laguerre grid is defined for $n \in \N_0, k \in\Z$.
Note that the second index $k \defeq m-n$ encodes the numbers of the NE/SW diagonals instead of the horizontal rows.

\def\plotSA #1 #2 #3 #4 /{ \arrow <1mm> [.25,.75] from #1 #2 to #3 #4 }
\begin{equation}
\label{Laguerre spaces grid local}
\def\SC #1 #2 {\setcoordinatesystem units <.5mm,.5mm> point at {-#1} {-#2} }
\beginpicture
\SC  00  00 \setplotarea x from -65 to +70, y from -70 to +60
\SC -60 -60 \put { $U_{n-1,k  }$ } [] at 00 00
\SC -60  00 \put { $U_{n-1,k+1}$ } [] at 00 00
\SC -60  60 \put { $U_{n-1,k+2}$ } [] at 00 00
\SC  00 -60 \put { $U_{n  ,k-1}$ } [] at 00 00
\SC  00  00 \put { $U_{n  ,k  }$ } [] at 00 00
\SC  00  60 \put { $U_{n  ,k+1}$ } [] at 00 00
\SC +60 -60 \put { $U_{n+1,k-2}$ } [] at 00 00
\SC  60  00 \put { $U_{n+1,k-1}$ } [] at 00 00
\SC +60 +60 \put { $U_{n+1,k  }$ } [] at 00 00

\SC  00  00 \plotSA +20   +01   +40   +01   / \put {$\stD k   + XC $} [b]  at +30  03 
\SC  00  60 \plotSA +20   +01   +40   +01   / \put {$\stD k+1 + XC $} [b]  at +30  03 
\SC -60  60 \plotSA +20   +01   +40   +01   / \put {$\stD k+2 + XC $} [b]  at +30  03 
\SC -60  00 \plotSA +20   +01   +40   +01   / \put {$\stD k+1 + XC $} [b]  at +30  03 
\SC -60 -60 \plotSA +20   +01   +40   +01   / \put {$\stD k   + XC $} [b]  at +30  03 
\SC  00 -60 \plotSA +20   +01   +40   +01   / \put {$\stD k-1 + XC $} [b]  at +30  03 

\SC  00  00 \plotSA -20   -01   -40   -01   / \put {$\stD -D       $} [t]  at -30 -02 
\SC  60  00 \plotSA -20   -01   -40   -01   / \put {$\stD -D       $} [t]  at -30 -02 
\SC  60  60 \plotSA -20   -01   -40   -01   / \put {$\stD -D       $} [t]  at -30 -02 
\SC  00  60 \plotSA -20   -01   -40   -01   / \put {$\stD -D       $} [t]  at -30 -02 
\SC  00 -60 \plotSA -20   -01   -40   -01   / \put {$\stD -D       $} [t]  at -30 -02 
\SC  60 -60 \plotSA -20   -01   -40   -01   / \put {$\stD -D       $} [t]  at -30 -02 

\SC  00  00 \plotSA -01   +20   -01   +40   / \put {$\stD -C       $} [Br]  at -02 +30 
\SC  60  00 \plotSA -01   +20   -01   +40   / \put {$\stD -C       $} [Br]  at -02 +30 
\SC -60  00 \plotSA -01   +20   -01   +40   / \put {$\stD -C       $} [Br]  at -02 +30 
\SC -60 -60 \plotSA -01   +20   -01   +40   / \put {$\stD -C       $} [Br]  at -02 +30 
\SC  00 -60 \plotSA -01   +20   -01   +40   / \put {$\stD -C       $} [Br]  at -02 +30 
\SC  60 -60 \plotSA -01   +20   -01   +40   / \put {$\stD -C       $} [Br]  at -02 +30 

\SC  00  00 \plotSA +01   -20   +01   -40   / \put {$\stD \cD_{n  } $} [Bl]  at +02 -30 
\SC  60  00 \plotSA +01   -20   +01   -40   / \put {$\stD \cD_{n+1} $} [Bl]  at +02 -30 
\SC  60  60 \plotSA +01   -20   +01   -40   / \put {$\stD \cD_{n+1} $} [Bl]  at +02 -30 
\SC  00  60 \plotSA +01   -20   +01   -40   / \put {$\stD \cD_{n  } $} [Bl]  at +02 -30 
\SC -60  60 \plotSA +01   -20   +01   -40   / \put {$\stD \cD_{n-1} $} [Bl]  at +02 -30 
\SC -60  00 \plotSA +01   -20   +01   -40   / \put {$\stD \cD_{n-1} $} [Bl]  at +02 -30 

\SC  00  00 \plotSA +19.5 +20.5 +39.5 +40.5 / \put {$\stD n+k+1 + XC $} [rt] at +35  40 
\SC -60 -60 \plotSA +19.5 +20.5 +39.5 +40.5 / \put {$\stD n+k   + XC $} [rt] at +35  40 

\SC  00  00 \plotSA -19.5 -20.5 -39.5 -40.5 / \put {$\stD n   - XD   $} [lb] <-1mm,-2mm> at -35 -40 
\SC  60  60 \plotSA -19.5 -20.5 -39.5 -40.5 / \put {$\stD n+1 - XD   $} [lb] <-1mm,-2mm> at -35 -40 

\SC  00  00 \plotSA +19.5 -20.5 +39.5 -40.5 / \put {$\stD (k-1)\cD_{\!n  }\!- X$} [rb] <2mm,-3mm> at +35 -40 
\SC -60  60 \plotSA +19.5 -20.5 +39.5 -40.5 / \put {$\stD (k+1)\cD_{\!n-1}\!- X$} [rb] <2mm,-3mm> at +35 -40 

\SC  00  00 \plotSA -19.5 +20.5 -39.5 +40.5 / \put {$\stD DC         $} [lt] at -35  40 
\SC  60 -60 \plotSA -19.5 +20.5 -39.5 +40.5 / \put {$\stD DC         $} [lt] at -35  40 
\endpicture
\end{equation}

\newpage
\begin{theorem}[Laguerre grid]
The Laguerre grid is a well defined SIE grid.
The various circuit endomorphisms $U_{nk} \to U_{nk}$ act as scalars as follows
\begin{eqnarray*}
\bq\bq\bq 
\begin{array}[c]{ l @{} r @{\;\;=\;\;}  l }
\mbox{$\rightleftharpoons$ East}  & (-D) (k  +XC) & n + 1 \\[1.5ex]
\mbox{$\rightleftharpoons$ West}  & (-D) (k+1+XC) & n \\[1.5ex]
\mbox{$\rightleftharpoons$ North} & \cD_n (-C) & 1  \\[1.5ex]
\mbox{$\rightleftharpoons$ South} & (-C) \cD_n & 1 \\[1.5ex]
\mbox{$\rightleftharpoons$ Northeast} & (n+1 - XD ) (n+1+ k + XC)             & (n+1)(n+k+1) \\[1.5ex]
\mbox{$\rightleftharpoons$ Southwest} &  [ (n + k) + XC] [n - XD ]            & n(n+k)       \\[1.5ex]
\mbox{$\rightleftharpoons$ Northwest} & [(k+1)\cD_{n-1} - X ] DC              & n            \\[1.5ex]
\mbox{$\rightleftharpoons$ Southeast} & DC [(k-1)\cD_{n-1} - X ]              & n +1         \\[1.5ex]
\mbox{$\triangle$ East-Northeast}  &  (n+1 - XD ) (-C) ( k + XC)      & (n+1)(n+k+1) \\[1.5ex]
\mbox{$\triangle$ Northeast-East}  &  (-D) \cD_{n+1} (n+k+1+XC)       & n+1          \\[1.5ex]
\mbox{$\triangle$ Northeast-North} &  \cD_n (-D) (n+k+1 + XC )       & n+1          \\[1.5ex]
\mbox{$\triangle$ North-Northeast} &  (n+1-XD) (k+1+XC) (-C)         & (n+1)(n+k+1) \\[1.5ex]
\mbox{$\triangle$ North-Northwest} &  [(k+1) \cD_{n-1} - X ](-D)(-C) & n            \\[1.5ex]
\mbox{$\triangle$ Northwest-North} &  \cD_n (k+2+XC)  DC             & n            \\[1.5ex]
\mbox{$\triangle$ Northwest-West}  &  (k+1 + XC ) \cD_{n-1} D C       & n \\[1.5ex]
\mbox{$\triangle$ West-Northwest}  &  [(k+1)\cD_{n-1}-X](-C) (-D)     & n \\[1.5ex]
\mbox{$\Box \circlearrowleft $}    & \cD_n \, (-D) \, (-C) \, ( k + XC )   & n+1 \\[1.5ex]
\mbox{$\Box \circlearrowright$}    & (-D) \, \cD_{n+1} \, (k+1+XC) \, (-C) & n+1
\end{array}
\end{eqnarray*}
The double arrow symbolizes the narrow circuit endomorphisms in the direction as stated.
The triangle symbols $\triangle$ denote the various triangle circuit endomorphisms, the square symbols $\Box$ denote
the square circuit endomorphisms in clockwise and counterclockwise direction, respectively.
\end{theorem}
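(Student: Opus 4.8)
The plan is to reduce everything to identities in the localized Weyl algebra $\cA[X^{-1}]$: every arrow operator of the Laguerre grid lies in $\cA[X^{-1}]$, and both the well-definedness of the grid and the values of all circuit endomorphisms follow once one checks that, modulo the two relations $H_{nk}\eq 0$ and $D^{n+1}\eq 0$ that cut out $U_{nk}$, the relevant operator products collapse to the asserted scalars. The organising principle is Lemma~\ref{Lemma: Scalar commutators}: the rows, the columns, and the two families of diagonals of the grid are ladders, and the spaces $U_{nk}$ will be exactly their common SIE eigenspaces.

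First I would prove well-definedness. For each of the four edge directions, form the ladder of full representation spaces carrying the corresponding pair of arrow operators and check the scalar commutator condition~(\ref{commutator condition}). Using $[D,X]\eq 1$, $[C,X]\eq 1$, $[C,D]\eq 0$ and $C\cD_n\eq\cD_n C\eq D^{n+1}-1$ one gets the genuine operator identities
\[ (-D)(k+XC)-[(k+1)+XC](-D)\eq 1,\qquad (-C)\cD_n-\cD_n(-C)\eq 0, \]
and an analogous bracket equal to $2n+k+1$ along the NE/SW diagonals; hence Lemma~\ref{Lemma: Scalar commutators} applies to the row, column and NE/SW ladders with $\aa_n\eq n$, $\aa_n\eq 1$ and $\aa_n\eq n(n+k)$ respectively. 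Solving the eigenvalue equations $A^\Lq_n v\eq\aa_n v$ (using that $X$ is invertible in the diagonal case) shows that the eigenspaces are precisely $\ker H_{nk}$ for the rows and NE/SW diagonals and $\ker D^{n+1}$ for the columns, so that $U_{nk}$ is their common refinement. Along the NW/SE diagonals the corresponding bracket is $1+(k-1)D^{n+2}-(k+1)D^{n+1}$, which equals $1$ only modulo $D^{n+1}$; there one first checks directly (with $[D^j,X]\eq jD^{j-1}$) that $DC$ and $(k-1)\cD_n-X$ preserve the subspaces $\ker D^{n+1}$, then applies Lemma~\ref{Lemma: Scalar commutators} inside them, obtaining eigenspaces $\ker H_{nk}\cap\ker D^{n+1}\eq U_{nk}$. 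A final short check that each arrow also respects the ``other'' defining relation --- the $\ker D^{n+1}$ conditions for the row and diagonal arrows, the $\ker H_{nk}$ condition for the column arrows, again pure $[D,X]$/$[C,X]$ bookkeeping --- shows that every arrow restricts to a map between the correct spaces, i.e. the Laguerre grid is well defined.

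Next the eight double-arrow (``$\rightleftharpoons$'') scalars. These are read off from the $\aa$-sequences above, or obtained directly on $v\in U_{nk}$ by feeding in $H_{nk}v\eq 0$ in the form $[(k+1)+XC]Dv\eq -nv$ (for the horizontal round trips, and after cancelling $X$ also for the diagonal ones) and $C\cD_n v\eq\cD_n C v\eq -v$ (valid because $D^{n+1}v\eq 0$, for the vertical round trips). Since the arrows of the grid are exactly the single-edge arrows of the four directions, these eight round trips are all the short circuit endomorphisms; by Lemma~\ref{scalars equal}(iv) the Laguerre grid is therefore an SIE grid, and in particular every triangle and square circuit is already known to act as \emph{some} scalar.

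It remains to pin those down. For the two square circuits I would apply Lemma~\ref{Wide circuit endomorphisms} to a unit cell: the product of its four edge scalars is $(n+1)\cd 1\cd(n+1)\cd 1\eq(n+1)^2$, so one may take $\ga_{nk}\eq\da_{nk}\eq n+1$, compute one of them directly (for instance $\cD_n(-D)(-C)(k+XC)\eq n+1$ on $U_{nk}$, once more via $H_{nk}v\eq 0$ and $C\cD_n v\eq -v$), and the lemma returns the other square circuit together with the eight ``commutativity up to scalar'' relations around that cell. Each of the eight triangle circuits is the composite of a diagonal arrow with a two-step path along two square edges, and the same kind of computation modulo $H_{nk}$ and $D^{n+1}$ shows that such a two-step path equals an explicit scalar multiple of the opposite diagonal or axial arrow --- e.g. $-C(k+XC)\equiv n+k+1+XC$ on $U_{nk}$, whereas $(-D)(n+k+1+XC)\equiv(n+1)(-C)$ --- so the triangle collapses to that scalar times an already-computed diagonal or axial round trip, yielding the remaining table entries. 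The one real difficulty throughout is the sheer volume of noncommutative bookkeeping, above all in the intertwining identities behind well-definedness and in the mixed $XC$, $XD$, $\cD_n$ terms of the diagonal arrows, where one must keep in mind that e.g. $XC^2$ is \emph{not} $CXC$ but differs from it by the $[C,X]\eq 1$ correction.
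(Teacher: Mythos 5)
Your proposal is correct and follows essentially the same route as the paper: reduce each narrow circuit operator to $\pm H_{nk}+\mathrm{const}$ (resp.\ $1-D^{n+1}$), invoke Lemma~\ref{Lemma: Scalar commutators} to define the row, column and diagonal subladders, and then obtain the square and triangle scalars from Lemma~\ref{Wide circuit endomorphisms} together with one direct computation per cell. The only substantive difference is that you treat the NW/SE diagonal ladder (whose commutator $1+(k-1)D^{n+2}-(k+1)D^{n+1}$ is scalar only modulo $\ker D^{n+1}$) explicitly, a point the paper passes over with ``similar fashion,'' and your reliance on Lemma~\ref{scalars equal}(iv) for the global SIE claim is no weaker than the paper's own use of that lemma.
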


\begin{proof}
(1) The horizontal narrow circuit endomorphisms in East and West direction are:
\begin{eqnarray*}
A_{nk}^\Rq &=& (-D)(k + XC)     \eq - XCD - (k+1)D + 1 \eq -H_{nk} + n + 1 \\[1.5ex]
A_{nk}^\Lq &=& (k + 1 + XC)(-D) \eq - XCD - (k+1)D     \eq -H_{nk} + n
\end{eqnarray*}
When choosing the number sequence $\aa_{nk} = n$, we find that the ladder commutator condition (\ref{commutator condition}) in
Lemma \ref{Lemma: Scalar commutators} is fulfilled,
\begin{eqnarray}
\label{commutator condition Laguerre}
A_{nk}^\Rq - A_{nk}^\Lq &\eq& 1 \eq \aa_{n+1,k} - \aa_{nk}.
\end{eqnarray}
This Lemma shows that the (horizontal) SIE subladders
\begin{equation}
\label{Laguerre horizontal ladder I}
\beginpicture
\SC  00 00 \setplotarea x from -55 to +55, y from -03 to 03
\SC -50 00 \put {$\dots$}   [] at 00 00
\SC +50 00 \put {$\dots$}   [] at 00 00
\SC -30 00 \put {$\ker H_{n-1,k+1}$} [] <0mm,0mm> at -05 00
\SC  00 00 \put {$\ker H_{n   k  }$} [] at 00 00
\SC +30 00 \put {$\ker H_{n+1,k-1}$} [] <0mm,0mm> at +05 00
\SC -15 00 \arrow <1mm> [.25,.75] from -05 01 to +05  01 \put {\tiny $k+1+XC$} [b] at 00  02
           \arrow <1mm> [.25,.75] from +05 00 to -05  00 \put {\tiny $-D$} [t] at 00 -01
\SC +15 00 \arrow <1mm> [.25,.75] from -05 01 to +05  01 \put {\tiny $k + XC$} [b] at 00  02
           \arrow <1mm> [.25,.75] from +05 00 to -05  00 \put {\tiny $-D$} [t] at 00 -01
\endpicture
\end{equation}
with $\ker H_{nk} = \eig(A_{nk}^\Lq,\aa_{nk})$ are well defined. Because of
\begin{eqnarray*}
\bq\bq D^{n+2} (k + XC) &=& k D^{n+2} + (XD^{n+2} +(n+2)D^{n+1}) C \\ 
&=& [ XCD + k D + (n+2)C ] D^{n+1}  D^{n} (-D) \eq - D^{n+1} 
\end{eqnarray*}
we see that the even more restricted horizontal SIE subladders
\begin{equation}
\label{Laguerre horizontal ladder II}
\beginpicture
\SC  00 00 \setplotarea x from -55 to +55, y from -03 to 03
\SC -50 00 \put {$\dots$}   [] at 00 00
\SC +50 00 \put {$\dots$}   [] at 00 00
\SC -30 00 \put {$U_{n-1,k+1}$} [] <0mm,0mm> at -05 00
\SC  00 00 \put {$U_{n   k  }$} [] at 00 00
\SC +30 00 \put {$U_{n+1,k-1}$} [] <0mm,0mm> at +05 00
\SC -15 00 \arrow <1mm> [.25,.75] from -05 01 to +05  01 \put {\tiny $k+1+XC$} [b] at 00  02
           \arrow <1mm> [.25,.75] from +05 00 to -05  00 \put {\tiny $-D$} [t] at 00 -01
\SC +15 00 \arrow <1mm> [.25,.75] from -05 01 to +05  01 \put {\tiny $k + XC$} [b] at 00  02
           \arrow <1mm> [.25,.75] from +05 00 to -05  00 \put {\tiny $-D$} [t] at 00 -01
\endpicture
\end{equation}
are well defined.
\mip
(2) Because of
\begin{eqnarray*}
& &    H_{n,k+1} (-C) - (-C) H_{nk} \\ 
&=& [CXD + (k+1)D + n ] (-C) -\; (-C ) [XCD + (k+1)D + n ] \eq 0 \\[1.5ex]
& & H_{nk} \cD_n - \cD_n H_{n,k+1} \\ 
&=& [XCD + (k+1)D + n ] \cD_n  - \cD_n [CXD + (k+1)D + n ] \\
&=& XD (D^{n+1} - 1) - (D^{n+1} - 1) XD \\
&=& ( X D^{n+1} - D^{n+1} X ) D  \\
&=&  - (n+1) D^n D \eq 0
\end{eqnarray*}
the restricted vertical ladders
\begin{equation}
\label{Laguerre vertical ladder I}
\beginpicture
\SC  00 00 \setplotarea x from -55 to +55, y from -03 to 03
\SC -50 00 \put {$\dots$}   [] at 00 00
\SC +50 00 \put {$\dots$}   [] at 00 00
\SC -30 00 \put {$\ker H_{n-1,k}$} [] <0mm,0mm> at -05 00
\SC  00 00 \put {$\ker H_{n   k}$} [] at 00 00
\SC +30 00 \put {$\ker H_{n+1,k}$} [] <0mm,0mm> at +05 00
\SC -15 00 \arrow <1mm> [.25,.75] from -05 01 to +05  01 \put {\tiny $-C$} [b] at 00  02
           \arrow <1mm> [.25,.75] from +05 00 to -05  00 \put {\tiny $\cD_n$} [t] at 00 -01
\SC +15 00 \arrow <1mm> [.25,.75] from -05 01 to +05  01 \put {\tiny $-C$} [b] at 00  02
           \arrow <1mm> [.25,.75] from +05 00 to -05  00 \put {\tiny $\cD_{n+1}$} [t] at 00 -01
\endpicture
\end{equation}
and then, also the subladders
\begin{equation}
\label{Laguerre vertical ladder II}
\beginpicture
\SC  00 00 \setplotarea x from -55 to +55, y from -03 to 03
\SC -50 00 \put {$\dots$}   [] at 00 00
\SC +50 00 \put {$\dots$}   [] at 00 00
\SC -30 00 \put {$U_{n-1,k}$} [] <0mm,0mm> at -05 00
\SC  00 00 \put {$U_{n   k}$} [] at 00 00
\SC +30 00 \put {$U_{n+1,k}$} [] <0mm,0mm> at +05 00
\SC -15 00 \arrow <1mm> [.25,.75] from -05 01 to +05  01 \put {\tiny $-C$} [b] at 00  02
           \arrow <1mm> [.25,.75] from +05 00 to -05  00 \put {\tiny $\cD_n$} [t] at 00 -01
\SC +15 00 \arrow <1mm> [.25,.75] from -05 01 to +05  01 \put {\tiny $-C$} [b] at 00  02
           \arrow <1mm> [.25,.75] from +05 00 to -05  00 \put {\tiny $\cD_{n+1}$} [t] at 00 -01
\endpicture
\end{equation}
are well defined. The vertical circuit endomorphisms on $U_{nk}$ are
\begin{eqnarray*}
\cD_n (-C) &=& - D^{n+1} + 1 \eq 1 \\[1.5ex]
(-C) \cD_n &=& - D^{n+1} + 1 \eq 1
\end{eqnarray*}
This shows that (\ref{Laguerre vertical ladder II}) is an SIE subladder.

\mip
(3) The Northeast arrow in (\ref{Laguerre spaces grid local}) is well defined. Starting at $U_{nk}$, then going East and North yields the operator
\begin{eqnarray*}
     (-C) (k + XC)
&=& k + XC - (kD + DXC) \\
&=& k + XC + n - [kD + (XD + 1) C + n ] \\
&=& n + k + 1 + XC - [XCD + (k+1) D + n ] \\
&=& n + k + 1 + XC
\end{eqnarray*}

The Northeast narrow circuit endomorphism on $U_{nk}$ is
\begin{eqnarray*}
& & (n+1 - XD) ( n+1+k + XC) \\
&=& (n+1) ( n+1+k) + (n+1) XC - ( n+1+k ) XD - X D X C \\
&=& (n+1) ( n+1+k) - X [ (n+1) + k D + (XD + 1) C ] \\
&=& (n+1) ( n+1+k) - X H_{nk} \eq (n+1) ( n+1+k)
\end{eqnarray*}
\mip
(4) The Northeast-East triangle circuit endomorphism on $U_{nk}$ is
\begin{eqnarray*}
& & (-D) \cD_{n+1} (n+k+1 + XC ) \\ 
&=& - \cD_{n+1} [ (n+k+1)D + (XD+1) C ] \\
&=& - \cD_{n+1} [ XDC + (k+1)D + n + n C + C ] \\
&=& - \cD_{n+1} R_{nk} - (n+1) \cD_{n+1} C \\
&=& - (n+1) ( D^{n+2} - 1) \\
&=& n+1
\end{eqnarray*}
(5) The Northeast Square circuit endomorphism on $U_{nk}$ is
\begin{eqnarray*}
& & \cD_n \, (-D) \, (-C) \, ( k + XC ) \\
&=& \cD_n \, C  [ k D  + ( X D + 1) C ] \\
&=& ( D^{n+1} - 1) [ X C D + (k + 1 ) D + n - ( n + 1 ) ] \\
&=& - (n+1 ) ( D^{n+1} - 1) \\
&=& n+1
\end{eqnarray*}
All the other statements can be shown in a similar fashion, alternatively one can refer to Lemma \ref{Wide circuit endomorphisms}.
\end{proof}

\section{Associated Laguerre polynomials}
We have
\[  U_{0,0} \eq \ker R_{0,0} \;\cap\; \ker D \eq \ker ( XCD + D ) \;\cap\; \ker D  \eq \ker D.
\]
Within the standard representation $V = \cC^\8(\R)$ of the Weyl algebra we have $\ker D = \lgl 1\rgl$.
Thus, for $n,k\ge 0$ the spaces $U_{nk} = \lgl L_n^k \rgl$ are spanned by the associated Laguerre polynomials
\begin{eqnarray*}
L_n^k (x) &=& \sum_{j=0}^n {n+k \choose n-j} \frac{(-1)^j}{j!} x^j \eq
\f{(-1)^n}{n!} \, x^n + \dots + {n+k \choose n}.
\end{eqnarray*}

The diagram below shows the (monic) associated Laguerre polynomials in the grid for $n = 0,\dots 5$ and $m = n+k = 0,\dots,7$.
Note that all operators are only displayed up to scalar multiples.
For clarity we did not draw the diagonal arrows.

\def\doarE#1{
\arrow <1mm> [.25,.75] from 10  01 to 20 01
\arrow <1mm> [.25,.75] from 20 -01 to 10 -01
\put {\tiny $#1 + XC$} [b] at 15  02
\put {\tiny $-D$}      [t] at 15 -02
}
\def\doarN#1{
\arrow <1mm> [.25,.75] from -01 10 to -01 20
\arrow <1mm> [.25,.75] from  01 20 to  01 10
\put {\tiny $-C$}       [r] at -02 15
\put {\tiny $\cD_{#1}$} [l] at +02 15
}
\def\doarX#1{ }
\begin{eqnarray}
\bq\bq\bq 
\label{Laguerre spaces grid global}
\def\SC #1 #2 {\setcoordinatesystem units <.9mm,.7mm> point at {-#1} {-#2} }
\bq\bq\bq\bq\bq
\beginpicture
\def\ya#1{ \put {$\stC \langle #1 \rangle$} [] <0mm,0mm> at  00  00 }
\def\yb#1{ \put {$\stD \langle #1        $} [] <0mm,0mm> at -01  02 }
\def\yc#1{ \put {$\stD         #1 \rangle$} [] <0mm,0mm> at +01 -02 }
\def\yp#1{ \put {$\stD \langle #1        $} [] <0mm,0mm> at -01  04 }
\def\yq#1{ \put {$\stD         #1        $} [] <0mm,0mm> at -01  00 }
\def\yr#1{ \put {$\stD         #1 \rangle$} [] <0mm,0mm> at +01 -04 }

\SC 00 00 \setplotarea x from 00 to 160, y from 00 to 210
\SC  00  00 \doarE{ 0} \doarN{0} \ya{1}
\SC  30  00 \doarE{-1} \doarN{1} \ya{x}
\SC  60  00 \doarE{-2} \doarN{2} \ya{x^2}
\SC  90  00 \doarE{-3} \doarN{3} \ya{x^3}
\SC 120  00 \doarE{-4} \doarN{4} \ya{x^4}
\SC 150  00 \doarX{-5} \doarN{5} \ya{x^5}

\SC  00  30 \doarE{ 1} \doarN{0} \ya{1}
\SC  30  30 \doarE{ 0} \doarN{1} \ya{x-1}
\SC  60  30 \doarE{-1} \doarN{2} \ya{x^2-2x}
\SC  90  30 \doarE{-2} \doarN{3} \ya{x^3-3x^2}
\SC 120  30 \doarE{-3} \doarN{4} \ya{x^4-4x^3}
\SC 150  30 \doarX{-4} \doarN{5} \ya{x^5-5x^4}

\SC  00  60 \doarE{ 2} \doarN{0} \ya{1}
\SC  30  60 \doarE{ 1} \doarN{1} \ya{x-2}
\SC  60  60 \doarE{ 0} \doarN{2} \ya{x^2-4x+2}
\SC  90  60 \doarE{-1} \doarN{3} \yb{x^3-6x^2}\yc{\q +6x}
\SC 120  60 \doarE{-2} \doarN{4} \yb{x^4-8x^3} \yc{+12x^2}
\SC 150  60 \doarX{-3} \doarN{5} \yb{x^5-10x^4}\yc{+20x^3}

\SC  00  90 \doarE{ 3} \doarN{0} \ya{1}
\SC  30  90 \doarE{ 2} \doarN{1} \ya{x-3}
\SC  60  90 \doarE{ 1} \doarN{2} \ya{x^2-6x+6}
\SC  90  90 \doarE{ 0} \doarN{3} \yb{x^3-9x^2}   \yc{+18x-6}
\SC 120  90 \doarE{-1} \doarN{4} \yb{x^4-12x^3}  \yc{+36x^2-24x}
\SC 150  90 \doarX{-2} \doarN{5} \yb{x^5-15x^4}  \yc{+60x^3-60x^2}

\SC  00 120 \doarE{ 4} \doarN{0} \ya{1}
\SC  30 120 \doarE{ 3} \doarN{1} \ya{x-4}
\SC  60 120 \doarE{ 2} \doarN{2} \ya{x^2-8x+12}
\SC  90 120 \doarE{ 1} \doarN{3} \yb{x^3-12x^2} \yc{+36x-24}
\SC 120 120 \doarE{ 0} \doarN{4} \yb{x^4-16x^3+} \yc{72x^2-96x+24}
\SC 150 120 \doarX{-1} \doarN{5} \yp{x^5-20x^4} \yq{+120x^3} \yr{-240x^2+120x}

\SC  00 150 \doarE{ 5} \doarN{0} \ya{1}
\SC  30 150 \doarE{ 4} \doarN{1} \ya{x-5}
\SC  60 150 \doarE{ 3} \doarN{2} \ya{x^2-10x+20}
\SC  90 150 \doarE{ 2} \doarN{3} \yb{x^3-15x^2} \yc{+60x-60  }
\SC 120 150 \doarE{ 1} \doarN{4} \yp{x^4-20x^3} \yq{+120x^2}         \yr{-240x+120}
\SC 150 150 \doarX{ 0} \doarN{5} \yp{x^5-25x^4 \q} \yq{\q +200x^3 -600x^2} \yr{\q +600x-120}

\SC  00 180 \doarE{ 6} \doarN{0} \ya{1}
\SC  30 180 \doarE{ 5} \doarN{1} \ya{x-6}
\SC  60 180 \doarE{ 4} \doarN{2} \ya{x^2-12x+30}
\SC  90 180 \doarE{ 3} \doarN{3} \yb{x^3-18x^2} \yc{+90x-120}
\SC 120 180 \doarE{ 2} \doarN{4} \yp{x^4-24x^3} \yq{+180x^2} \yr{-480x+360}
\SC 150 180 \doarX{ 1} \doarN{5} \yp{x^5-30x^4 \q} \yq{\q +300x^3-1200x^2} \yr{\q +1800x-720}

\SC  00 210 \doarE{ 7} \doarX{0} \ya{1}
\SC  30 210 \doarE{ 6} \doarX{1} \ya{x-7}
\SC  60 210 \doarE{ 5} \doarX{2} \ya{x^2-14x+42}
\SC  90 210 \doarE{ 4} \doarX{3} \yb{x^3-21x^2} \yc{+126x-210}
\SC 120 210 \doarE{ 3} \doarX{4} \yp{x^4-28x^3} \yq{+252x^2} \yr{-840x+840}
\SC 150 210 \doarX{ 2} \doarX{5} \yp{x^5-35x^4 \q } \yq{\q +420x^3-2100x^2} \yr{\q +4200x+2520}

\SC  00  00 \put { $\stC n  = 0 $ } [t] at 00 -06
\SC  30  00 \put { $\stC n  = 1 $ } [t] at 00 -06
\SC  60  00 \put { $\stC n  = 2 $ } [t] at 00 -06
\SC  90  00 \put { $\stC n  = 3 $ } [t] at 00 -06
\SC 120  00 \put { $\stC n  = 4 $ } [t] at 00 -06
\SC 150  00 \put { $\stC n  = 5 $ } [t] at 00 -06

\SC  00  00  \put { $\stC k  = 0 \nearrow$   } [rt] at -05 -05
\SC  00  30  \put { $\stC k  = 1 \nearrow$   } [rt] at -05 -05
\SC  00  60  \put { $\stC k  = 2 \nearrow$   } [rt] at -05 -05
\SC  00  90  \put { $\stC k  = 3 \nearrow$   } [rt] at -05 -05
\SC  00 120  \put { $\stC k  = 4 \nearrow$   } [rt] at -05 -05
\SC  00 150  \put { $\stC k  = 5 \nearrow$   } [rt] at -05 -05
\SC  00 180  \put { $\stC k  = 6 \nearrow$   } [rt] at -05 -05
\SC  00 210  \put { $\stC k  = 7 \nearrow$   } [rt] at -05 -05
\endpicture
\end{eqnarray}

\newpage
When taking into account the norming factors for the Laguerre polynomials the local diagram (\ref{Laguerre spaces grid local})
has to be modified

\def\plotSA #1 #2 #3 #4 /{ \arrow <1mm> [.25,.75] from #1 #2 to #3 #4 }
\begin{equation}
\label{Laguerre polynomials grid local}
\def\SC #1 #2 {\setcoordinatesystem units <.5mm,.5mm> point at {-#1} {-#2} }
\beginpicture
\SC  00  00 \setplotarea x from -65 to +70, y from -70 to +60
\SC -60 -60 \put { $L_{n-1}^{k  }$ } [] at 00 00
\SC -60  00 \put { $L_{n-1}^{k+1}$ } [] at 00 00
\SC -60  60 \put { $L_{n-1}^{k+2}$ } [] at 00 00
\SC  00 -60 \put { $L_{n  }^{k-1}$ } [] at 00 00
\SC  00  00 \put { $L_{n  }^{k  }$ } [] at 00 00
\SC  00  60 \put { $L_{n  }^{k+1}$ } [] at 00 00
\SC +60 -60 \put { $L_{n+1}^{k-2}$ } [] at 00 00
\SC  60  00 \put { $L_{n+1}^{k-1}$ } [] at 00 00
\SC +60 +60 \put { $L_{n+1}^{k  }$ } [] at 00 00

\SC  00  00 \plotSA +20   +01   +40   +01   / \put {$\stD \f{k   + XC}{n+1} $} [b]  at +30  03 
\SC  00  60 \plotSA +20   +01   +40   +01   / \put {$\stD \f{k+1 + XC}{n+1} $} [b]  at +30  03 
\SC -60  60 \plotSA +20   +01   +40   +01   / \put {$\stD \f{k+2 + XC}{n  } $} [b]  at +30  03 
\SC -60  00 \plotSA +20   +01   +40   +01   / \put {$\stD \f{k+1 + XC}{n  } $} [b]  at +30  03 
\SC -60 -60 \plotSA +20   +01   +40   +01   / \put {$\stD \f{k   + XC}{n  } $} [b]  at +30  03 
\SC  00 -60 \plotSA +20   +01   +40   +01   / \put {$\stD \f{k-1 + XC}{n+1} $} [b]  at +30  03 

\SC  00  00 \plotSA -20   -01   -40   -01   / \put {$\stD -D       $} [t]  at -30 -02 
\SC  60  00 \plotSA -20   -01   -40   -01   / \put {$\stD -D       $} [t]  at -30 -02 
\SC  60  60 \plotSA -20   -01   -40   -01   / \put {$\stD -D       $} [t]  at -30 -02 
\SC  00  60 \plotSA -20   -01   -40   -01   / \put {$\stD -D       $} [t]  at -30 -02 
\SC  00 -60 \plotSA -20   -01   -40   -01   / \put {$\stD -D       $} [t]  at -30 -02 
\SC  60 -60 \plotSA -20   -01   -40   -01   / \put {$\stD -D       $} [t]  at -30 -02 

\SC  00  00 \plotSA -01   +20   -01   +40   / \put {$\stD      -C  $} [r]  at -02 +30 
\SC  60  00 \plotSA -01   +20   -01   +40   / \put {$\stD      -C  $} [r]  at -02 +30 
\SC -60  00 \plotSA -01   +20   -01   +40   / \put {$\stD      -C  $} [r]  at -02 +30 
\SC -60 -60 \plotSA -01   +20   -01   +40   / \put {$\stD      -C  $} [r]  at -02 +30 
\SC  00 -60 \plotSA -01   +20   -01   +40   / \put {$\stD      -C  $} [r]  at -02 +30 
\SC  60 -60 \plotSA -01   +20   -01   +40   / \put {$\stD      -C  $} [r]  at -02 +30 

\SC  00  00 \plotSA +01   -20   +01   -40   / \put {$\stD \cD_{n  } $} [l]  at +02 -30 
\SC  60  00 \plotSA +01   -20   +01   -40   / \put {$\stD \cD_{n+1} $} [l]  at +02 -30 
\SC  60  60 \plotSA +01   -20   +01   -40   / \put {$\stD \cD_{n+1} $} [l]  at +02 -30 
\SC  00  60 \plotSA +01   -20   +01   -40   / \put {$\stD \cD_{n  } $} [l]  at +02 -30 
\SC -60  60 \plotSA +01   -20   +01   -40   / \put {$\stD \cD_{n-1} $} [l]  at +02 -30 
\SC -60  00 \plotSA +01   -20   +01   -40   / \put {$\stD \cD_{n-1} $} [l]  at +02 -30 

\SC  00  00 \plotSA +19.5 +20.5 +39.5 +40.5 / \put {$\stD \f{n+k+1 + XC}{n+1}  $} [rt] at +35  40 
\SC -60 -60 \plotSA +19.5 +20.5 +39.5 +40.5 / \put {$\stD \f{n+k   + XC}{n  }  $} [rt] at +35  40 

\SC  00  00 \plotSA -19.5 -20.5 -39.5 -40.5 / \put {$\stD \f{n   - XD}{n+k  }  $} [lb] <-1mm,-2mm> at -35 -40 
\SC  60  60 \plotSA -19.5 -20.5 -39.5 -40.5 / \put {$\stD \f{n+1 - XD}{n+k+1}  $} [lb] <-1mm,-2mm> at -35 -40 

\SC  00  00 \plotSA +19.5 -20.5 +39.5 -40.5 / \put {$\stD \f{(k-1)\cD_{\!n  } - X}{n+1}$} [rb] <2mm,-3mm> at +35  -40 
\SC -60  60 \plotSA +19.5 -20.5 +39.5 -40.5 / \put {$\stD \f{(k+1)\cD_{\!n-1} - X}{n  }$} [rb] <2mm,-3mm> at +35  -40 

\SC  00  00 \plotSA -19.5 +20.5 -39.5 +40.5 / \put {$\stD DC       $} [lt] at -35  40 
\SC  60 -60 \plotSA -19.5 +20.5 -39.5 +40.5 / \put {$\stD DC       $} [lt] at -35  40 
\endpicture
\end{equation}

The reader might wonder why we did not consider this diagram right from the start.
But when including the norm factors the condition (\ref{commutator condition Laguerre}) on the scalar ladder commutator
is not longer fulfilled. This condition is foundational for the whole ``Laguerre system''.

\mip
We can now demonstrate that many of the classical identities for associated Laguerre polynomials can be derived from the
diagram (\ref{Laguerre polynomials grid local}).

\begin{theorem}[Associated Laguerre polynomials]
For the associated Laguerre polynomials the following relations hold
\begin{itemize}
\item[(i)] Three term recurrence relation.
\begin{eqnarray*}
(2n+1+k - X ) L_n^k &=& ( n + 1 ) L_{n+1}^k + ( n + k ) L_{n-1}^k.
\end{eqnarray*}
\item[(ii)] There are various so called three-point-rules
\begin{eqnarray*} 
\bq\bq\bq\bq\bq\bq 
\begin{array}[c]{ r @{\;\eq\;} l @{\;\eq\;} l }
    L_n^k
& 
    \1n (n+k+1+XC) L_{n-1}^{k+1} - L_{n-1}^{k+1}
& L_{n}^{k+1} - L_{n-1}^{k+1} \\[1.5ex]
    n L_n^k
& (n+k + XC) L_{n-1}^k
& (n+k) L_{n-1}^k - X L_{n-1}^{k+1} \\[1.5ex]
    k L_n^k
& (k + XC) L_n^k - XC L_n^k
& (n+1) L_{n+1}^{k-1} + X L_n^{k+1} \\[1.5ex]
    (n - X) L_n^k
& X C L_n^k + (n - X D) L_n^k
& - X L_n^{k+1} + (n+k) L_{n-1}^k
\end{array}
\end{eqnarray*}
\item[(iii)] Sheffer sequence. The North and West arrows pointing to $L_{n-1}^{k+1}$ yield the identity
\begin{eqnarray*}
D L_n^k &=& - L_{n-1}^{k+1} \eq (D-1) L_{n-1}^k.
\end{eqnarray*}
\item[(iv)] Reflection across the main diagonal $k = 0$.
\begin{eqnarray*}
L_{n+k}^{-k} &=& \f{n!}{(n+k)!} \, (-X)^k L_n^k
\end{eqnarray*}
\mip
\item[(v)] Rodrigues formula.
\begin{eqnarray*}
L_n^k(x) &=& \1{n!} X^{-k} C^n x^{n+k}
\end{eqnarray*}
Within the standard representation we have
\begin{eqnarray*}
L_n^k(x) &=& \1{n!} x^{-k} e^x \6^n (e^{-x} x^{n+k})
\end{eqnarray*}
\end{itemize}
\end{theorem}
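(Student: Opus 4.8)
The plan is to extract every needed operator relation from the normalized grid (\ref{Laguerre polynomials grid local}) and then to finish each item with a few lines of Weyl-algebra algebra; no orthogonality or analysis is used. The basis for this is the Laguerre-grid theorem: the grid (\ref{Laguerre spaces grid local}) is a well defined SIE grid, so every arrow operator restricts to a linear map between the one dimensional spaces $U_{nk} = \lgl L_n^k \rgl$. Since these spaces are spanned by the monic $L_n^k$, each such restriction is multiplication by a scalar, and the scalar is pinned down by matching leading coefficients; this is exactly the content of (\ref{Laguerre polynomials grid local}). Written as identities, the North, South and West arrows carry no norm factor, $(-C)L_n^k = L_n^{k+1}$, $\cD_n L_n^k = L_n^{k-1}$, $(-D)L_n^k = L_{n-1}^{k+1}$, while the East, Northeast and Southwest arrows do, $(k+XC)L_n^k = (n+1)L_{n+1}^{k-1}$, $(n+k+1+XC)L_n^k = (n+1)L_{n+1}^k$, $(n-XD)L_n^k = (n+k)L_{n-1}^k$. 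Together with $C = D-1$ and $[D,X] = 1$, these are the only inputs.

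I would first dispatch (iii): the West arrow gives $D L_n^k = -L_{n-1}^{k+1}$, and the North arrow one step down gives $(D-1)L_{n-1}^k = CL_{n-1}^k = -(-C)L_{n-1}^k = -L_{n-1}^{k+1}$, so both sides equal $-L_{n-1}^{k+1}$. Next, each of the four three-point rules in (ii) is the combination of exactly one ``nontrivial'' arrow with the North/West identities, followed by a trivial rewriting such as $XC = XD - X$ or $(D - C) = 1$; e.g. $nL_n^k = (n+k+XC)L_{n-1}^k = (n+k)L_{n-1}^k + X(CL_{n-1}^k) = (n+k)L_{n-1}^k - XL_{n-1}^{k+1}$, and similarly for the others (the first rule also uses $L_n^{k+1} - L_{n-1}^{k+1} = (-C)L_n^k - (-D)L_n^k = (D-C)L_n^k = L_n^k$). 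For (i), I would combine the Northeast identity $(n+k+1+XC)L_n^k = (n+1)L_{n+1}^k$ with $XD L_n^k = nL_n^k - (n+k)L_{n-1}^k$, the rearranged Southwest identity: after $XC = XD - X$ this collapses to $(2n+1+k-X)L_n^k = (n+1)L_{n+1}^k + (n+k)L_{n-1}^k$.

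For (iv) the plan is induction on $k \ge 0$, the case $k = 0$ being trivial. For the step, apply the East arrow at $U_{n+k,-k}$, namely $(-k + XC)L_{n+k}^{-k} = (n+k+1)L_{n+k+1}^{-k-1}$, to the hypothesis $L_{n+k}^{-k} = \f{n!}{(n+k)!}(-X)^k L_n^k$; using $[C,X^k] = kX^{k-1}$ one gets the operator identity $(-k+XC)X^k = X^{k+1}C$, and then $CL_n^k = -L_n^{k+1}$ (the North arrow) produces $L_{n+k+1}^{-k-1} = \f{n!}{(n+k+1)!}(-X)^{k+1}L_n^{k+1}$. For (v) I would first note $U_{00} = \ker(XCD + D) \cap \ker D = \ker D = \lgl 1 \rgl$, so $L_0^0 = 1$; travelling North $k$ times (each arrow $-C$, and $(-C)1 = 1$) and then Northeast $n$ times yields $L_n^k = \f1{n!}(n+k+XC)(n-1+k+XC)\cdots(k+1+XC)\cdot 1$. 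An induction on $n$, using the commutation $X^k(m+XC) = ((m-k)+XC)X^k$ and $C^{n+1}X = XC^{n+1} + (n+1)C^n$, upgrades this to $X^k(n+k+XC)\cdots(k+1+XC)\cdot 1 = C^n x^{n+k}$, that is $L_n^k = \f1{n!}X^{-k}C^n x^{n+k}$. In the standard representation $C^n = (D-1)^n = e^x \6^n e^{-x}$ (conjugation by $e^x$), which gives the stated form $L_n^k(x) = \f1{n!}x^{-k}e^x \6^n(e^{-x}x^{n+k})$.

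The genuinely error-prone part is the bookkeeping announced in the first paragraph: fixing each norm factor from its leading coefficient and keeping the signs consistent along the ladder. This, however, is precisely what (\ref{Laguerre polynomials grid local}) already records, so we may quote it. After that the only non-mechanical steps are the two commutator identities driving the inductions for (iv) and (v); items (i)--(iii) are essentially immediate once the arrow identities are on the table.
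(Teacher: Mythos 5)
Your proposal is correct and, for items (i)--(iii), follows essentially the same route as the paper: read off the normalized arrow identities from (\ref{Laguerre polynomials grid local}) and combine the two diagonal arrows (for the three-term recurrence) or one nontrivial arrow with the North/West arrows and the trivial relation $D-C=1$ (for the three-point rules and the Sheffer identity). The paper is in fact terser here -- it merely asserts that (ii) and (iii) "were already shown" -- so your explicit verifications are a welcome filling-in. Where you genuinely diverge is in (iv) and (v). For (iv) the paper's induction step runs the three-arrow chain $U_{nk}\to U_{n,k-1}\to U_{n+k-1,-(k-1)}\to U_{n+k,-k}$ (South, reflect, East) and evaluates the composed operator using $C\,\cD_n = D^{n+1}-1$; you instead apply the single East arrow at $U_{n+k,-k}$ directly to the inductive hypothesis and use $(-k+XC)X^k = X^{k+1}C$, which is shorter and avoids invoking $\cD_n$. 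For (v) the paper starts at $U_{n+k,-(n+k)}=\langle x^{n+k}\rangle$ and composes $C^n$ with the reflection from (iv), fixing the constant by leading coefficients; you instead build $L_n^k$ from $U_{00}$ by $k$ North and $n$ Northeast steps and then prove the operator identity $X^k(n+k+XC)\cdots(k+1+XC)\cdot 1 = C^n x^{n+k}$ by induction on $n$ via $C^{n+1}X = XC^{n+1}+(n+1)C^n$. Your version of (v) is independent of (iv) and replaces the leading-coefficient comparison by an exact algebraic identity, at the cost of one extra commutator induction; both computations check out. The only caveat, which applies equally to the paper, is that everything rests on the correctness of the normalization factors recorded in (\ref{Laguerre polynomials grid local}), and that your induction in (iv), like the paper's, explicitly treats only $k\ge 0$.
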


\begin{proof}
(i) See the diagram (\ref{Laguerre polynomials grid local}).
The two diagonal arrows in Northeast and Southwest direction starting at $L_n^k$ contain the two equations
\begin{eqnarray*}
( n + 1 ) L_{n+1}^k &=& (n+1+k + XD - X ) L_n^k \\
( n + k ) L_{n-1}^k &=& (n     - XD     ) L_n^k .
\end{eqnarray*}
Adding up the two equations yields the three term recurrence relation in (i).
\mip
(ii) and (iii) were already shown in the theorem.
\mip
(iv) For fixed $n \in\N_0$ we prove by induction over $k \in\N_0$
\begin{eqnarray*}
U_{nk} \q \sr{(-X)^k} \llongrightarrow \q U_{n+k,-k}.
\end{eqnarray*}
This operator jumps over $|k|$ steps in Southeast ($k \ge 0$) resp.\ Northwest ($k \le 0$) direction.
\mip
The initial case $k = 0$ is trivial.
In order to show the induction step $k-1 \mto k$  we consider the following chain of operators
\begin{eqnarray*}
U_{n k  }         \sr{\cD_n     } \llongrightarrow 
U_{n,k-1}         \sr{(-X)^{k-1}} \llongrightarrow 
U_{n+k-1,-(k-1)}  \sr{-(k-1)+XC } \llongrightarrow 
U_{n+k,-k}
\end{eqnarray*}
The operator on the middle arrow is well defined according to the induction hypothesis.
Then on $U_{nk}$ we compute the composition
\begin{eqnarray*}
\bq\bq  [-(k-1) + X C] \, (-X)^{k-1} \, \cD_n
&=& X (-1)^{k-1}[-(k-1) X^{k-2} + CX^{k-1}] \cD_n \\
&=& X (-1)^{k-1} X^{k-1} \, C \, \cD_n \\ 
&=& - (-X)^k (D^{n+1} - 1) \eq (-X)^k
\end{eqnarray*}
For $k \le 0 $ the reflection is realized by the inverse multiplication operator $(-X)^k$.

Inspection of the leading coefficient then shows that
\begin{eqnarray*}
L_{n+k}^{-k} &=& \f{n!}{(n+k)!} \, (-X)^k L_n^k.
\end{eqnarray*}

(v) In order to compute $L_n^k \in U_{nk}$ we start at $U_{n+k,-(n+k)}$ (bottom line in the above diagram (\ref{Laguerre spaces grid global})), go $k$ steps north, then reflect across the main diagonal, altogether
\begin{eqnarray*}
U_{n+k,-(n+k)} \sr{C^n}\llongrightarrow  U_{n+k,-k} \sr{X^{-k}}\llongrightarrow U_{nk}
\end{eqnarray*}
Since $U_{n+k,-(n+k)} = \lgl x^{n+k} \rgl $ we find that
\begin{eqnarray*}
L_n^k(x) &=& \1{n!} X^{-k} C^n x^{n+k}
\end{eqnarray*}
One can figure out the scalar factor by comparison of the leading coefficients.

\mip
Within the standard representation the operator $C = D - 1$ acts by $f(x) \mto e^x \6 [e^{-x} f(x)]$, hence
the operator $C^n$ acts by $f(x) \mto e^x \6^n [e^{-x} f(x)]$.
\end{proof}

\section{The Legendre-Gegenbauer grid}
In the Weyl algebra we additionally define the operator
\begin{eqnarray*}
R &\defeq& X^2 - 1
\end{eqnarray*}
It fulfills the commutator relation
\begin{eqnarray}
\label{commutator DR}
[D,R^j] &\eq& 2jX R^{j-1}, \q\q j  \in \N_0.
\end{eqnarray}

For $n,\ell \in\R$ define the (second order differential) \emph{Legendre operator}
\begin{eqnarray*}
H_{n\ell} &\defeq& H_{n\ell}^\7{Leg} \defeq RD^2 + (\ell+1)X D - n (n + \ell). 
\end{eqnarray*}
and its kernel
\begin{eqnarray*}
U_{n\ell} &\defeq& \ker H_{n\ell} \eq \ker X^2 H_{n\ell} \eq \ker R H_{n\ell}.
\end{eqnarray*}
Here we assume that the operators $X^2$ and $R$ act as injective operators on the given representation space.
This is fulfilled for the standard representation of the Weyl algebra on $\cC^\8(\R)$ or any suitable subspace.

\mip
The diagram below shows the local structure of the \emph{Legendre-Gegenbauer grid} in the Weyl algebra.

\begin{equation}
\label{Legendre spaces grid local}
\def\SC #1 #2 {\setcoordinatesystem units <.6mm,.6mm> point at {-#1} {-#2} }
\def\plotSA #1 #2 #3 #4 /{ \arrow <1mm> [.25,.75] from #1 #2 to #3 #4 }
\beginpicture
\SC -60 -60 \put { $U_{n-1,\ell-2}$ } [] at 00 00
\SC -60  00 \put { $U_{n-1,\ell  }$ } [] at 00 00
\SC -60  60 \put { $U_{n-1,\ell+2}$ } [] at 00 00
\SC  00 -60 \put { $U_{n  ,\ell-2}$ } [] at 00 00
\SC  00  00 \put { $U_{n  ,\ell  }$ } [] at 00 00
\SC  00  60 \put { $U_{n  ,\ell+2}$ } [] at 00 00
\SC +60 -60 \put { $U_{n+1,\ell-2}$ } [] at 00 00
\SC  60  00 \put { $U_{n+1,\ell  }$ } [] at 00 00
\SC +60 +60 \put { $U_{n+1,\ell+2}$ } [] at 00 00

\SC  00  00 \plotSA +20 +01 +40 +01 / \put {$\stD (n+\ell  )X + RD $} [b] <0mm,0mm> at +30 +03
\SC  00  60 \plotSA +20 +01 +40 +01 / \put {$\stD (n+\ell+2)X + RD $} [b] <0mm,0mm> at +30 +03
\SC -60  60 \plotSA +20 +01 +40 +01 / \put {$\stD (n+\ell+1)X + RD $} [b] <0mm,0mm> at +30 +03
\SC -60  00 \plotSA +20 +01 +40 +01 / \put {$\stD (n+\ell-1)X + RD $} [b] <0mm,0mm> at +30 +03
\SC -60 -60 \plotSA +20 +01 +40 +01 / \put {$\stD (n+\ell-3)X + RD $} [b] <0mm,0mm> at +30 +03
\SC  00 -60 \plotSA +20 +01 +40 +01 / \put {$\stD (n+\ell-2)X + RD $} [b] <0mm,0mm> at +30 +03

\SC  00  00 \plotSA -20 -01 -40 -01 / \put {$\stD  n    X-RD  $} [t] at -30 -03
\SC  60  00 \plotSA -20 -01 -40 -01 / \put {$\stD (n+1) X-RD  $} [t] at -30 -03
\SC  60  60 \plotSA -20 -01 -40 -01 / \put {$\stD (n+1) X-RD  $} [t] at -30 -03
\SC  00  60 \plotSA -20 -01 -40 -01 / \put {$\stD  n    X-RD  $} [t] at -30 -03
\SC  00 -60 \plotSA -20 -01 -40 -01 / \put {$\stD  n    X-RD  $} [t] at -30 -03
\SC  60 -60 \plotSA -20 -01 -40 -01 / \put {$\stD (n+1) X-RD  $} [t] at -30 -03

\SC  00  00 \plotSA -01 +20 -01 +40 / \put {$\stD XD+n+\ell   $} [r] <0mm,+4mm> at -03 +30
\SC  60  00 \plotSA -01 +20 -01 +40 / \put {$\stD XD+n+\ell+1 $} [r] <0mm,+4mm> at -03 +30
\SC -60  00 \plotSA -01 +20 -01 +40 / \put {$\stD XD+n+\ell-1 $} [r] <0mm,+4mm> at -03 +30
\SC -60 -60 \plotSA -01 +20 -01 +40 / \put {$\stD XD+n+\ell-3 $} [r] <0mm,+4mm> at -03 +30
\SC  00 -60 \plotSA -01 +20 -01 +40 / \put {$\stD XD+n+\ell-2 $} [r] <0mm,+4mm> at -03 +30
\SC  60 -60 \plotSA -01 +20 -01 +40 / \put {$\stD XD+n+\ell-1 $} [r] <0mm,+4mm> at -03 +30

\SC -60  00 \plotSA +01 -20 +01 -40 / \put {$\stD R(\!XD - n+1) + \ell-1 $} [l] <0mm,-4mm> at +03 -30
\SC  00  00 \plotSA +01 -20 +01 -40 / \put {$\stD R(\!XD - n  ) + \ell-1 $} [l] <0mm,-4mm> at +03 -30
\SC  60  00 \plotSA +01 -20 +01 -40 / \put {$\stD R(\!XD - n-1) + \ell-1 $} [l] <0mm,-4mm> at +03 -30
\SC -60  60 \plotSA +01 -20 +01 -40 / \put {$\stD R(\!XD - n+1) + \ell+1 $} [l] <0mm,-4mm> at +03 -30
\SC  00  60 \plotSA +01 -20 +01 -40 / \put {$\stD R(\!XD - n  ) + \ell+1 $} [l] <0mm,-4mm> at +03 -30
\SC  60  60 \plotSA +01 -20 +01 -40 / \put {$\stD R(\!XD - n-1) + \ell+1 $} [l] <0mm,-4mm> at +03 -30

\SC  00  00  \plotSA -19.5 +20.5 -39.5 +40.5 / \put {$\stD D $} [lt] at -35  40
\SC  60 -60  \plotSA -19.5 +20.5 -39.5 +40.5 / \put {$\stD D $} [lt] at -35  40

\SC  00 00 \plotSA +19.5 -20.5 +39.5 -40.5 / \put {$\stD (\ell-1)X+RD $} [rb] <3mm,-3mm> at +35 -40
\SC -60 60 \plotSA +19.5 -20.5 +39.5 -40.5 / \put {$\stD (\ell+1)X+RD $} [rb] <3mm,-3mm> at +35 -40


\endpicture
\end{equation}

\newpage
\begin{theorem}[Legendre-Gegenbauer grid]
The Legendre-Gegenbauer grid is a well defined SIE grid.
The various circuit endomorphisms $U_{n\ell} \to U_{n\ell}$ act as scalars as follows
\begin{small}
\begin{eqnarray*}
\bq\bq
\begin{array}[c]{ l @{} r c l }
\mbox{$\rightleftharpoons$ East}      & [(n+1) X - R D ] \cd [ (n+\ell) X + R D ]    &=& (n+1)(n+\ell)          \\[1.5ex]
\mbox{$\rightleftharpoons$ West}      & [ (n+\ell-1) X + R D ] \cd [ n X - R D ]     &=&  n   (n+\ell-1)        \\[1.5ex]
\mbox{$\rightleftharpoons$ North}     & [R(XD-n)+ (\ell+1) ] \, (XD + n+\ell)        &=& (n+\ell  ) (n+\ell+1)  \\[1.5ex]
\mbox{$\rightleftharpoons$ South}     & (XD + n+\ell-2) [R(XD-n)+ (\ell-1) ]         &=& (n+\ell-2) (n+\ell-1)  \\[1.5ex]
\mbox{$\rightleftharpoons$ Northwest} & [ (\ell+1) X + R D ] \, D                    &=&   n  (n+\ell)          \\[1.5ex]
\mbox{$\rightleftharpoons$ Southeast} & D \, [ (\ell-1) X + R D ]                    &=& (n+1)(n+\ell-1)        \\[1.5ex]
\mbox{$\triangle$ N-NW}               & [(\ell+1)X + RD]  \, (nX-RD) \,(XD+n+\ell)   &=& n (n+\ell  )(n+\ell+1) \\[1.5ex]
\mbox{$\triangle$ NW-N}               & [R(XD-n)+\ell+1]  \, [(n+\ell+1)X+RD] \, D   &=& n (n+\ell  )(n+\ell+1) \\[1.5ex]
\mbox{$\triangle$ NW-W}               & [(n+\ell-1)X+RD ] \, (XD+n+\ell+1)    \, D   &=& n (n+\ell-1)(n+\ell  ) \\[1.5ex]
\mbox{$\triangle$ W-NW}               & [(\ell+1)X+RD ]   \, (XD+n+\ell-1) \,(nX-RD) &=& n (n+\ell-1)(n+\ell  ) \\[1.5ex]
\mbox{$\Box \circlearrowleft $}       & [ R(XD-n)+\ell+1 ] \, [(n+1)X-RD] \q\q                                  \\ {}
                                    & [XD+n+\ell+1] \, [(n+\ell)X + RD ]           &=& (n+1) (n+\ell)  \\
                                    &                                              & & \q (n+\ell+1)(n+\ell+2) \\[1.5ex]
\mbox{$\Box \circlearrowright$}       & [ (n+1)X-RD] \, [R(XD-(n+1))+ \ell+1]  \\ {}
                                    & [(n+\ell+2)X +RD] \, (XD+n+\ell)             &=& (n+1) (n+\ell)         \\
                                    &                                              & & \q (n+\ell+1) (n+\ell+2)
\end{array}
\end{eqnarray*}
\end{small}
\end{theorem}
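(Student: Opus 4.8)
The plan is to imitate the proof of the Laguerre grid theorem: first reduce the narrow (double-arrow) circuits to the ladder Lemma~\ref{Lemma: Scalar commutators}, then treat the triangle and square circuits either by a direct Weyl-algebra computation --- exactly as in steps (4)--(5) of the Laguerre proof --- or by invoking Lemma~\ref{Wide circuit endomorphisms}.

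First I would handle each of the three arrow-pair directions --- East/West ($n\mapsto n\pm1$, $\ell$ fixed), North/South ($\ell\mapsto\ell\pm2$, $n$ fixed), and Northwest/Southeast ($(n,\ell)\mapsto(n\mp1,\ell\pm2)$) --- by multiplying out the product of the two opposite arrow operators in the Weyl algebra, using only the commutator rule (\ref{commutator DR}) and its immediate consequences such as $DR=RD+2X$ and $D^2R=RD^2+4XD+2$. A finite calculation then shows that each such product equals a scalar plus a left multiple of $H_{n\ell}$, of $X^2H_{n\ell}$, or of $RH_{n\ell}$; for instance, in normal form,
\[ [(n+\ell-1)X+RD]\,[nX-RD] \eq -R\,H_{n\ell}+n(n+\ell-1), \qquad [(\ell+1)X+RD]\,D \eq H_{n\ell}+n(n+\ell), \]
the opposite circuits being $-R\,H_{n\ell}+(n+1)(n+\ell)$ and $H_{n\ell}+(n+1)(n+\ell-1)$ respectively, while the North/South pair reduces analogously, now to a multiple of $X^2H_{n\ell}$. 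Since $X^2$ and $R$ are assumed to act injectively on the representation space, $\ker H_{n\ell}=\ker X^2H_{n\ell}=\ker RH_{n\ell}=U_{n\ell}$, so on $U_{n\ell}$ every one of these six narrow circuit operators acts as precisely the scalar listed in the theorem.

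Next, for each direction the two opposite-arrow products just computed differ, \emph{as operators of the Weyl algebra}, only by a scalar depending on the ladder index --- for the East/West pair the difference is $(n+1)(n+\ell)-n(n+\ell-1)=2n+\ell=\aa_{n+1}-\aa_n$ with $\aa_n:=n(n+\ell-1)$. This is exactly the ladder commutator condition (\ref{commutator condition}), so Lemma~\ref{Lemma: Scalar commutators} applies to the ambient ladder formed by these Weyl-algebra operators: it shows that $U_{n\ell}=\eig(A^\Lq,\aa)=\eig(A^\Rq,\aa')$ is a well-defined eigenspace and that the arrows restrict to it, turning each horizontal, vertical and NW/SE-diagonal subladder into an SIE ladder. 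That all three ladder families through a common vertex cut out the \emph{same} space $U_{n\ell}$ is precisely where the injectivity of $X^2$ and $R$ is used; this is what makes the grid globally consistent.

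Finally, for the triangle and square circuits I would either expand the relevant three- and four-factor operator products directly, again pushing all $D$'s to the right via (\ref{commutator DR}) and recognizing the outcome as a scalar plus a left multiple of $H_{n\ell}$ (or $X^2H_{n\ell}$, $RH_{n\ell}$) that vanishes on $U_{n\ell}$; or, more economically, observe that the eight narrow circuits around the unit square $U_{n\ell}\to U_{n+1,\ell}\to U_{n+1,\ell+2}\to U_{n,\ell+2}\to U_{n\ell}$ already act as the scalars from the first step, check the product condition (\ref{wide circuit condition}) on those scalars, and verify one of the sixteen equivalent equations of Lemma~\ref{Wide circuit endomorphisms}, which then delivers both square circuits and the ``up-to-scalar commutativity'' identities simultaneously, the triangle scalars being read off by composing a half-square path with a narrow circuit. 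The main obstacle is entirely computational: because $[D,R]=2X$ is not a scalar, the products in the first and third steps spread into many terms --- powers of $X$ up to $X^4$ paired with $D^2$ --- and the real work is collecting them reliably into the shape $c\pm H_{n\ell}$ (resp.\ $c\pm X^2H_{n\ell}$, $c\pm RH_{n\ell}$); keeping straight which of the three kernel descriptions each circuit produces, and hence exactly where injectivity of $X^2$ and $R$ enters, is the only point demanding care.
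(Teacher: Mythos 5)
Your proposal is correct and follows essentially the same route as the paper: the narrow East/West, North/South and NW/SE circuits are expanded in the Weyl algebra into a scalar plus a left multiple of $RH_{n\ell}$, $X^2H_{n\ell}$ or $H_{n\ell}$ respectively (your displayed normal forms and the scalar sequences $\aa_{n\ell}=n(n+\ell-1)$ etc.\ match the paper's), Lemma~\ref{Lemma: Scalar commutators} then yields the SIE subladders, and the triangle and square circuits are settled either by direct expansion or by verifying one representative identity and invoking Lemma~\ref{Wide circuit endomorphisms}. You also correctly identify that the injectivity of $X^2$ and $R$ is exactly what makes $\ker H_{n\ell}=\ker X^2H_{n\ell}=\ker RH_{n\ell}$ and hence the three ladder families consistent, which is the one point the paper only states as a standing assumption.
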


\begin{proof}
(1) The horizontal narrow circuit endomorphisms in East and West direction are:
\begin{eqnarray*}
    A_{n\ell}^\Rq
&=& [ (n+1) X - R D ] \, [ (n+\ell) X + R D ] \\
&=& (n+1)(n+\ell) X^2 - n (R D X - X R D ) - R D R D \\ 
& & -\; \ell R D X + X R D \\
&=& (n+1)(n+\ell) (R+1) - n R - R (RD+2X) D \\ 
& & - \; \ell R ( XD +1) + X R D \\
&=& - R H_{n\ell} + (n+1)(n+\ell) \\[1.5ex]
    A_{n\ell}^\Lq
& & [ (n+\ell-1) X + R D ] \, ( n X - R D ) \\
&=& n(n+\ell-1) X^2 + n R [D,X] - R D R D - (\ell-1) X R D \\
&=& n(n+\ell-1) (R +1) + n R - R (RD + 2X) D - (\ell-1) X R D \\
&=& - R H_{n\ell} + n (n+\ell-1)
\end{eqnarray*}

When choosing the number sequence $\aa_{n\ell} = n (n+\ell-1)$, we find that the commutator condition (\ref{commutator condition}) in
Lemma \ref{Lemma: Scalar commutators} is fulfilled,
\begin{eqnarray}
\label{commutator condition Laguerre horizontal}
A_{n\ell}^\Rq - A_{n\ell}^\Lq &\eq& (n+1)(n+\ell) - n (n+\ell-1) \eq \aa_{n+1,\ell} - \aa_{n\ell}.
\end{eqnarray}
Then the (horizontal) SIE subladders
\begin{equation}
\label{Legendre horizontal ladder}
\beginpicture
\SC  00 00 \setplotarea x from -55 to +55, y from -03 to 03
\SC -50 00 \put {$\dots$}   [] at 00 00
\SC +50 00 \put {$\dots$}   [] at 00 00
\SC -30 00 \put {$U_{n-1,\ell}$} [] <0mm,0mm> at -05 00
\SC  00 00 \put {$U_{n   \ell}$} [] at 00 00
\SC +30 00 \put {$U_{n+1,\ell}$} [] <0mm,0mm> at +05 00
\SC -15 00 \arrow <1mm> [.25,.75] from -08 01 to +08  01 \put {\tiny $(n+\ell-1) X + R D $} [b] at 00  02
           \arrow <1mm> [.25,.75] from +08 00 to -08  00 \put {\tiny $ n X - R D         $} [t] at 00 -01
\SC +15 00 \arrow <1mm> [.25,.75] from -08 01 to +08  01 \put {\tiny $ (n+\ell) X + R D  $} [b] at 00  02
           \arrow <1mm> [.25,.75] from +08 00 to -08  00 \put {\tiny $ (n+1) X - R D     $} [t] at 00 -01
\endpicture
\end{equation}
are well defined.
\mip
(2) The vertical narrow circuit endomorphisms in North and South direction are:
\begin{eqnarray*}
    B_{n\ell}^\Rq
&=& [R(XD-n)+ (\ell+1) ](XD + n+\ell) \\
&=& R XDXD + (n+\ell) R XD - n R XD - n(n+\ell)R \\
& & +\; (\ell+1) XD + (\ell+1)(n+\ell) \\
&=& R XDXD + \ell (X^2-1)XD - n(n+\ell)(X^2-1) \\ 
& & + \;  (\ell+1) XD + (\ell+1)(n+\ell) \\
&=& R X( XD+1)D + \ell X^3D - n(n+\ell)X^2 \\ 
& & + \; XD + (n+\ell)(n+\ell+1) \\ 
&=& X^2 [R D^2 + (\ell+1)XD - n (n+\ell) - X D ] \\
& & + \; R X D + X D + (n+\ell)(n+\ell+1) \\
&=& X^2 H_{n\ell} + (n+\ell)(n+\ell+1) \\[1.5ex]
    B_{n\ell}^\Lq
&=& (XD + n+\ell-2) [R(XD-n)+ (\ell-1) ] \\
&=& X[D(RX)]D - n XDR + (\ell-1) XD + (n+\ell-2)RXD \\
& &  \hs{1cm} - n (n+\ell-2)R + (n+\ell-2)(\ell-1) \\
&=&  X[(RX)D + (3X^2 - 1)] D - n X(RD +2X) \\ 
& & +\; (\ell-1) XD + (n+\ell-2)RXD \\
& & \hs{1cm} - n (n+\ell-2)(X^2-1) + (n+\ell-2)(\ell-1) \\
&=& X^2 R D^2 + 3 X^3 D + (\ell-2) XD + (\ell-2) R XD - 2nX^2 \\
&& \hs{1cm} - n (n+\ell-2)X^2 + (n+\ell-2)(\ell-1) \\
&=& X^2 H_{n\ell} + (n+\ell-1)(n+\ell-2)
\end{eqnarray*}
Again the difference of the two circuit operators is a scalar. With $\ba_{n\ell} = (n+\ell-1)(n+\ell-2)$ we find that
the commutator condition (\ref{commutator condition}) in Lemma \ref{Lemma: Scalar commutators} is fulfilled,
\begin{eqnarray}
\label{commutator condition Laguerre vertical}
B_{n\ell}^\Rq - B_{n\ell}^\Lq 
&\eq& (n+\ell)(n+\ell+1) - (n+\ell-1)(n+\ell-2) \\
\nn &=& \ba_{n,\ell+2} - \ba_{n\ell}.
\end{eqnarray}
Lemma \ref{Lemma: Scalar commutators} implies that the vertical ladders in the Legendre-Gegenbauer grid
\begin{equation}
\def\SC #1 #2 { \setcoordinatesystem units <1mm,1mm> point at {-#1} {-#2} } 
\label{Legendre horizontal ladder}
\beginpicture
\SC  00 00 \setplotarea x from -55 to +55, y from -03 to 03
\SC -50 00 \put {$\dots$}   [] at 00 00
\SC +50 00 \put {$\dots$}   [] at 00 00
\SC -30 00 \put {$U_{n,\ell-2}$} [] <0mm,0mm> at -05 00
\SC  00 00 \put {$U_{n \ell  }$} [] at 00 00
\SC +30 00 \put {$U_{n,\ell+2}$} [] <0mm,0mm> at +05 00
\SC -15 00 \arrow <1mm> [.25,.75] from -08 01 to +08  01 \put {\tiny $XD + n  + \ell -2$} [b] at 00  02
           \arrow <1mm> [.25,.75] from +08 00 to -08  00 \put {\tiny $R(XD-n) + \ell -1$} [t] at 00 -01
\SC +15 00 \arrow <1mm> [.25,.75] from -08 01 to +08  01 \put {\tiny $XD + n  + \ell   $} [b] at 00  02
           \arrow <1mm> [.25,.75] from +08 00 to -08  00 \put {\tiny $R(XD-n) + \ell +1$} [t] at 00 -01
\endpicture
\end{equation}
are well defined.

\mip
(3) The diagonal narrow circuit endomorphisms in Southeast and Northwest direction are
\begin{eqnarray*}
    D [(\ell-1)X + RD ]
&=& (RD + 2X) D + (\ell-1) (XD +1) \\
&=& RD^2 + (\ell+1) XD + \ell - 1 \\
&=& H_{n\ell} + (n+1)(n+\ell-1) \\[1.5ex]
[(\ell+1)X + RD ] D
&=& H_{n\ell} + n (n+\ell)
\end{eqnarray*}
Once more the difference of the two circuit operators is a scalar. With $\eta_{n\ell} = n(n+\ell)$ we find that
the commutator condition (\ref{commutator condition}) in Lemma \ref{Lemma: Scalar commutators} is fulfilled,
\begin{eqnarray}
\label{commutator condition Laguerre diagonal}
\nn & & D [(\ell-1)X + RD ] - [(\ell+1)X + RD ] D \\
&=& (n+1)(n+\ell-1) - n(n+\ell)  \\ 
\nn &=& \ell - 1 \eq \eta_{n+1,\ell-2} - \eta_{n\ell}.
\end{eqnarray}
Lemma \ref{Lemma: Scalar commutators} implies that the diagonal ladders in the Legendre-Gegenbauer grid
\begin{equation}
\def\SC #1 #2 { \setcoordinatesystem units <1mm,1mm> point at {-#1} {-#2} } 
\label{Legendre diagonal ladder}
\beginpicture
\SC  00 00 \setplotarea x from -55 to +55, y from -03 to 03
\SC -52 00 \put {$\dots$}   [] at 00 00
\SC +52 00 \put {$\dots$}   [] at 00 00
\SC -30 00 \put {$U_{n-1,\ell+2}$} [] <0mm,0mm> at -05 00
\SC  00 00 \put {$U_{n   \ell  }$} [] at 00 00
\SC +30 00 \put {$U_{n+1,\ell-2}$} [] <0mm,0mm> at +05 00
\SC -15 00 \arrow <1mm> [.25,.75] from -08 01 to +08  01 \put {\tiny $ (\ell+1) X + RD $} [b] at 00  02
           \arrow <1mm> [.25,.75] from +08 00 to -08  00 \put {\tiny $   D             $} [t] at 00 -01
\SC +15 00 \arrow <1mm> [.25,.75] from -08 01 to +08  01 \put {\tiny $ (\ell-1) X + RD $} [b] at 00  02
           \arrow <1mm> [.25,.75] from +08 00 to -08  00 \put {\tiny $   D             $} [t] at 00 -01
\endpicture
\end{equation}
are well defined SIE subladders.
\mip
(4) Note that the Northeast square is commutative.
\begin{eqnarray*}
& & (XD + n+\ell+1)[(n+\ell)X + RD ] \\ 
& & \q - \; [(n+\ell+2)X + RD] (XD + n + \ell) \\ 
&=& (n+\ell) XDX - (n+\ell+2) XXD + XDRD \\ 
& & \q -\; RDXD + (n+\ell+1) RD \\
& & \q - \; (n+\ell) RD + [(n+\ell+1)(n+\ell) - (n+\ell+2)(n+\ell)] X   \\
&=& (n+\ell) X (DX - XD) - 2XD + X(RD+2X) D \\
& & \q\q - \; R(XD+1) D + RD - (n+\ell) X \;\eq\; 0
\end{eqnarray*}
Then, with steps (1) and (2) we get for the Northeast square endomorphism in counterclockwise direction
\begin{eqnarray*}
& & [ R(XD-n)+\ell+1 ] \, [(n+1)X-RD] \,\\ 
& & \q  [XD+n+\ell+1] \, [(n+\ell)X + RD ] \\
&=& [ R(XD-n)+\ell+1 ] \, [(n+1)X-RD] \\
& & \q [(n+\ell+2)X + RD] (XD + n + \ell) \\
&=& [ R(XD-n)+\ell+1 ] \, (n+1) (n+\ell+2) \, (XD + n + \ell) \\
&=& (n+1) (n+\ell ) (n+\ell+1) (n+\ell+2)
\end{eqnarray*}
and in clockwise direction
\begin{eqnarray*}
& & [ (n+1)X-RD] \, [R(XD-(n+1))+\ell+1] \\
& & \q [(n+\ell+2)X +RD] \, (XD + n + \ell) \\
&=& [ (n+1)X-RD] \, [R(XD-(n+1))+\ell+1] \\ 
& & (XD + n+\ell+1)   \, [(n+\ell)X + RD ] \\
&=& [ (n+1)X-RD] \, (n+\ell+1) (n+\ell+2)  \, [(n+\ell)X + RD ] \\
&=& (n+1) (n+\ell  ) (n+\ell+1) (n+\ell+2)
\end{eqnarray*}
(5) The Northwest square on $U_{n\ell}$ is commutative up to scalar factors.
\begin{eqnarray*}
& & (n+\ell-1) (nX - RD ) ( XD + n+\ell) \\ 
& & \q - \; (n+\ell+1) ( XD + n+\ell-1)(nX - RD ) \\
&=& (n+\ell-1) [nX^2 D + n(n+\ell) X - RDXD - (n+\ell)RD] \\
& & \q - \; (n+\ell+1)[n X D X + n(n+\ell-1) X - XDRD - (n+\ell-1)RD] \\
&=& n(n+\ell-1) X^2 D - n(n+\ell+1) XDX + [n(n+\ell)(n+\ell-1) \\
& & \q - \; n (n+\ell+1)(n+\ell-1)] X - (n+\ell-1) R(XD+1) D \\
& & \q + \; (n+\ell+1) X (RD+2X) D \\ 
& & \q + \; [-(n+\ell-1)(n+\ell) + (n+\ell+1)(n+\ell-1)]RD \\
&=& n(n+\ell+1)X ( XD-DX) - 2n X^2 D - n(n+\ell-1)X \\
& & \q + \; (n+\ell) (2X^2 D - RD) + R(XD+1) D \\ 
& & \q + \; X (RD + 2X) D + (n+\ell-1) RD \\
&=& -2n(n+\ell) X - 2nX^2 D + (n+\ell) 2X^2 D \\ 
& & \q - \;RD + RXD^2 + RD + XRD^2 + 2X^2 D \\
&=& 2X [RD^2 + (\ell+1)XD - n(n+\ell)] \eq 2 X H_{n\ell} \eq 0
\end{eqnarray*}

(6) The North Northwest triangle circuit endomorphism acts on $U_{n\ell}$ as follows
\begin{eqnarray*}
& & [(\ell+1)X + RD] \, (nX - RD) (XD + n + \ell) \\
&=& [(\ell+1)X + RD] \, [n X^2 D + n(n+\ell) X - R(XD+1) D - (n+\ell) RD] \\
&=& [(\ell+1)X + RD] \, \Big[ (-X) [RD^2 + (\ell+1) XD - n(n+\ell)] \\ 
& & \q + \; (n+\ell+1)X^2 D - (n+\ell+1)RD \Big] \\
&=& [(\ell+1)X + RD] \, [-X H_{n\ell} + (n+\ell+1) D ] \\
&=& (n+\ell+1) [RD^2 + (\ell+1) XD - n(n+\ell) ] \\ 
& & \q +\; n (n+\ell) (n+\ell+1) - [(\ell+1) X + RD ] X H_{n\ell} \\
&=& [n+\ell+1 - (\ell+1)X^2 - R D X ] H_{n\ell} + n(n+\ell) (n+\ell+1) \\
&=& n(n+\ell) (n+\ell+1)
\end{eqnarray*}

We skip all the other calculations, since they can be done in a similar manner or by reference to Lemma \ref{Wide circuit endomorphisms}.
\end{proof}

\section{The Legendre-Gegenbauer polynomials}
We define
\[  W_{0,0} \eq \ker H_{0,0} \;\cap\; \ker D \eq \ker ( XCD + D ) \;\cap\; \ker D  \eq \ker D.
\]
This space generates an SIE subladder $(W_{nk})$ of the subladder $(U_{nk})$.
Within the standard representation $V = \cC^\8(\R)$ of the Weyl algebra we have $\ker D = \lgl 1\rgl$.
Thus, for $n,k\ge 0$ the spaces $W_{nk}$ are one--dimensional.
For $n\ge 0$ the one--dimensional spaces $W_{n\ell}$ exactly contain the Legendre-Gegenbauer polynomials
\begin{eqnarray*}
P_n^\ell(x)
&=& \sum_{j=0}^{\lfloor\2n\rfloor} (-1)^j \f {\Ga(n+\2\ell-j)}{\Ga(\2\ell) j! (n-2j)!} (2x)^{n-2j} \\
&=& \f{(\ell+2n-2)(\ell+2n-4) \cd\dots\cd \ell}{n!} \, x^n  \\ 
& &  - \; \f{(\ell+2n-4)(\ell+2n-6) \cd\dots\cd \ell}{(n-2)!} \, x^{n-2} \pm \dots
\end{eqnarray*}

The Legendre-Gegenbauer polynomials with small index are shown in the following diagram
\def\doarE#1#2{
\arrow <1mm> [.25,.75] from 10  01 to 20 01
\arrow <1mm> [.25,.75] from 20 -01 to 10 -01
\put {\tiny $#1 X + RD$} [b] at 15 02
\put {\tiny $#2 X - RD$} [t] at 15 -02
}
\def\doarN#1#2#3{
\arrow <1mm> [.25,.75] from -01 10 to -01 20
\arrow <1mm> [.25,.75] from  01 20 to  01 10
\put {\tiny $ XD    #1     $} [r] <0mm, 2mm> at -02 15
\put {\tiny $ R(XD  #2) #3 $} [l] <0mm,-2mm> at +02 15
}
\def\doarX#1#2{ }
\begin{eqnarray}
\bq\bq\bq\bq\bq\bq
\beginpicture
\label{Legendre spaces grid global}
\def\SC #1 #2 {\setcoordinatesystem units <.9mm,.8mm> point at {-#1} {-#2} }
\def\y#1{  \put {$\stC \langle #1 \rangle$} [] at  00  00 }
\def\yl#1{ \put {$\stC \langle #1        $} [] at -01  02 }
\def\yr#1{ \put {$\stC         #1 \rangle$} [] at +01 -02 }
\SC 00 00
\setplotarea x from 00 to 160, y from 00 to 210
\SC  00  00 \doarE{-1}{ 1} \doarN{- 1}{  }{   } \y{1}
\SC  30  00 \doarE{ 0}{ 2} \doarN{   }{-1}{   } \y{x}
\SC  60  00 \doarE{ 1}{ 3} \doarN{+ 1}{-2}{   } \y{x^2-1}
\SC  90  00 \doarE{ 2}{ 4} \doarN{+ 2}{-3}{   } \y{3x^3-3x}
\SC 120  00 \doarE{ 3}{ 5} \doarN{+ 3}{-4}{   } \yl{5x^4-}\yr{6x^2+1}
\SC 150  00 \doarX{ 4}{ 6} \doarN{+ 4}{-5}{   } \yl{7x^5-}\yr{10x^3+3x}

\SC  00  30 \doarE{ 1}{ 1} \doarN{+ 1}{  }{+ 2} \y{1}
\SC  30  30 \doarE{ 2}{ 2} \doarN{+ 2}{-1}{+ 2} \y{x}
\SC  60  30 \doarE{ 3}{ 3} \doarN{+ 3}{-2}{+ 2} \y{3x^2-1}
\SC  90  30 \doarE{ 4}{ 4} \doarN{+ 4}{-3}{+ 2} \y{5x^3-3x}
\SC 120  30 \doarE{ 5}{ 5} \doarN{+ 5}{-4}{+ 2} \yl{35x^4-}\yr{30x^2+3}
\SC 150  30 \doarX{ 6}{ 6} \doarN{+ 6}{-5}{+ 2} \yl{63x^5-}\yr{70x^3+15x}

\SC  00  60 \doarE{ 3}{ 1} \doarN{+ 3}{  }{+ 4} \y{1}
\SC  30  60 \doarE{ 4}{ 2} \doarN{+ 4}{-1}{+ 4} \y{x}
\SC  60  60 \doarE{ 5}{ 3} \doarN{+ 5}{-2}{+ 4} \y{5x^2-1}
\SC  90  60 \doarE{ 6}{ 4} \doarN{+ 6}{-3}{+ 4} \y{7x^3-3x}
\SC 120  60 \doarE{ 7}{ 5} \doarN{+ 7}{-4}{+ 4} \yl{21x^4-}\yr{14x^2+1}
\SC 150  60 \doarX{ 8}{ 6} \doarN{+ 8}{-5}{+ 4} \yl{231x^5-}\yr{210x^3+35x}

\SC  00  90 \doarE{ 5}{ 1} \doarN{+ 5}{  }{+ 6} \y{1}
\SC  30  90 \doarE{ 6}{ 2} \doarN{+ 6}{-1}{+ 6} \y{x}
\SC  60  90 \doarE{ 7}{ 3} \doarN{+ 7}{-2}{+ 6} \y{7x^2-1}
\SC  90  90 \doarE{ 8}{ 4} \doarN{+ 8}{-3}{+ 6} \y{9x^3-3x}
\SC 120  90 \doarE{ 9}{ 5} \doarN{+ 9}{-4}{+ 6} \yl{33x^4-}\yr{18x^2+1}
\SC 150  90 \doarX{10}{ 6} \doarN{+10}{-5}{+ 6} \yl{143x^5-}\yr{110x^3+15x}

\SC  00 120 \doarE{ 7}{ 1} \doarN{+ 7}{  }{+ 8} \y{1}
\SC  30 120 \doarE{ 8}{ 2} \doarN{+ 8}{-1}{+ 8} \y{x}
\SC  60 120 \doarE{ 9}{ 3} \doarN{+ 9}{-2}{+ 8} \y{9x^2-1}
\SC  90 120 \doarE{10}{ 4} \doarN{+10}{-3}{+ 8} \y{11x^3-3x}
\SC 120 120 \doarE{11}{ 5} \doarN{+11}{-4}{+ 8} \yl{143x^4-}\yr{66x^2+3}
\SC 150 120 \doarX{12}{ 6} \doarN{+12}{-5}{+ 8} \yl{39x^5-}\yr{26x^3+3x}

\SC  00 150 \doarE{ 9}{ 1} \doarN{+ 9}{  }{+10} \y{1}
\SC  30 150 \doarE{10}{ 2} \doarN{+10}{-1}{+10} \y{x}
\SC  60 150 \doarE{11}{ 3} \doarN{+11}{-2}{+10} \y{11x^2-1}
\SC  90 150 \doarE{12}{ 4} \doarN{+12}{-3}{+10} \y{13x^3-3x}
\SC 120 150 \doarE{13}{ 5} \doarN{+13}{-4}{+10} \yl{65x^4-}\yr{26x^2+1}
\SC 150 150 \doarX{14}{ 6} \doarN{+14}{-5}{+10} \yl{17x^5-}\yr{10x^3+x}

\SC  00 180 \doarE{11}{ 1} \doarN{+11}{  }{+12} \y{1}
\SC  30 180 \doarE{12}{ 2} \doarN{+12}{-1}{+12} \y{x}
\SC  60 180 \doarE{13}{ 3} \doarN{+13}{-2}{+12} \y{13x^2-1}
\SC  90 180 \doarE{14}{ 4} \doarN{+14}{-3}{+12} \y{15x^3-3x}
\SC 120 180 \doarE{15}{ 5} \doarN{+15}{-4}{+12} \yl{85x^4-}\yr{30x^2+1}
\SC 150 180 \doarX{16}{ 6} \doarN{+16}{-5}{+12} \yl{323x^5-}\yr{170x^3+15x}

\SC  00 210 \doarE{13}{ 1} \doarX{0}{} \y{1}
\SC  30 210 \doarE{14}{ 2} \doarX{1}{} \y{x}
\SC  60 210 \doarE{15}{ 3} \doarX{2}{} \y{15x^2-1}
\SC  90 210 \doarE{16}{ 4} \doarX{3}{} \y{17x^3-3x}
\SC 120 210 \doarE{17}{ 5} \doarX{4}{} \yl{323x^4-}\yr{102x^2+3}
\SC 150 210 \doarX{18}{ 6} \doarX{5}{} \yl{399x^5-}\yr{190x^3+15x}

\SC  00  00 \put { $\stC n  = 0 $ } [t] at 00 -06
\SC  30  00 \put { $\stC n  = 1 $ } [t] at 00 -06
\SC  60  00 \put { $\stC n  = 2 $ } [t] at 00 -06
\SC  90  00 \put { $\stC n  = 3 $ } [t] at 00 -06
\SC 120  00 \put { $\stC n  = 4 $ } [t] at 00 -06
\SC 150  00 \put { $\stC n  = 5 $ } [t] at 00 -06

\SC  00  00  \put { $\stC \ell = -1$ } [r] at -03 -00
\SC  00  30  \put { $\stC \ell =  1$ } [r] at -03 -00 
\SC  00  60  \put { $\stC \ell =  3$ } [r] at -03 -00
\SC  00  90  \put { $\stC \ell =  5$ } [r] at -03 -00
\SC  00 120  \put { $\stC \ell =  7$ } [r] at -03 -00
\SC  00 150  \put { $\stC \ell =  9$ } [r] at -03 -00
\SC  00 180  \put { $\stC \ell = 11$ } [r] at -03 -00
\SC  00 210  \put { $\stC \ell = 13$ } [r] at -03 -00
\endpicture
\end{eqnarray}

Now taking into account the norming factors for the Gegenbauer polynomials the local diagram (\ref{Legendre spaces grid local})
has to be modified

\def\plotSA #1 #2 #3 #4 /{ \arrow <1mm> [.25,.75] from #1 #2 to #3 #4 }
\begin{equation}
\bq\bq\bq 
\label{Legendre polynomials grid local}
\def\SC #1 #2 {\setcoordinatesystem units <.8mm,.6mm> point at {-#1} {-#2} }
\beginpicture
\SC  00  00 \setplotarea x from -65 to +70, y from -70 to +70
\SC -60 -60 \put { $P_{n-1}^{\ell-2}$ } [] at 00 00
\SC -60  00 \put { $P_{n-1}^{\ell  }$ } [] at 00 00
\SC -60  60 \put { $P_{n-1}^{\ell+2}$ } [] at 00 00
\SC  00 -60 \put { $P_{n  }^{\ell-2}$ } [] at 00 00
\SC  00  00 \put { $P_{n  }^{\ell  }$ } [] at 00 00
\SC  00  60 \put { $P_{n  }^{\ell+2}$ } [] at 00 00
\SC +60 -60 \put { $P_{n+1}^{\ell-2}$ } [] at 00 00
\SC  60  00 \put { $P_{n+1}^{\ell  }$ } [] at 00 00
\SC +60 +60 \put { $P_{n+1}^{\ell+2}$ } [] at 00 00

\SC  00  00 \plotSA +20 +01 +40 +01 / \put {$\stD \f{(n+\ell  )X + RD}{n+1} $} [b] <0mm,0mm> at +30 +03
\SC  00  60 \plotSA +20 +01 +40 +01 / \put {$\stD \f{(n+\ell+2)X + RD}{n+1} $} [b] <0mm,0mm> at +30 +03
\SC -60  60 \plotSA +20 +01 +40 +01 / \put {$\stD \f{(n+\ell+1)X + RD}{n  } $} [b] <0mm,0mm> at +30 +03
\SC -60  00 \plotSA +20 +01 +40 +01 / \put {$\stD \f{(n+\ell-1)X + RD}{n  } $} [b] <0mm,0mm> at +30 +03
\SC -60 -60 \plotSA +20 +01 +40 +01 / \put {$\stD \f{(n+\ell-3)X + RD}{n  } $} [b] <0mm,0mm> at +30 +03
\SC  00 -60 \plotSA +20 +01 +40 +01 / \put {$\stD \f{(n+\ell-2)X + RD}{n+1} $} [b] <0mm,0mm> at +30 +03

\SC  00  00 \plotSA -20 -01 -40 -01 / \put {$\stD \f{ n    X-RD}{n+\ell-1}  $} [t] at -30 -03
\SC  60  00 \plotSA -20 -01 -40 -01 / \put {$\stD \f{(n+1) X-RD}{n+\ell  }  $} [t] at -30 -03
\SC  60  60 \plotSA -20 -01 -40 -01 / \put {$\stD \f{(n+1) X-RD}{n+\ell+2}  $} [t] at -30 -03
\SC  00  60 \plotSA -20 -01 -40 -01 / \put {$\stD \f{ n    X-RD}{n+\ell+1}  $} [t] at -30 -03
\SC  00 -60 \plotSA -20 -01 -40 -01 / \put {$\stD \f{ n    X-RD}{n+\ell-3}  $} [t] at -30 -03
\SC  60 -60 \plotSA -20 -01 -40 -01 / \put {$\stD \f{(n+1) X-RD}{n+\ell-2}  $} [t] at -30 -03

\SC  00  00 \plotSA -01 +20 -01 +40 / \put {$\stD \f{XD+n+\ell  }{\ell  } $} [r] <+1mm,+4mm> at -03 +30
\SC  60  00 \plotSA -01 +20 -01 +40 / \put {$\stD \f{XD+n+\ell+1}{\ell  } $} [r] <+1mm,+4mm> at -03 +30
\SC -60  00 \plotSA -01 +20 -01 +40 / \put {$\stD \f{XD+n+\ell-1}{\ell  } $} [r] <+1mm,+4mm> at -03 +30
\SC -60 -60 \plotSA -01 +20 -01 +40 / \put {$\stD \f{XD+n+\ell-3}{\ell-2} $} [r] <+1mm,+4mm> at -03 +30
\SC  00 -60 \plotSA -01 +20 -01 +40 / \put {$\stD \f{XD+n+\ell-2}{\ell-2} $} [r] <+1mm,+4mm> at -03 +30
\SC  60 -60 \plotSA -01 +20 -01 +40 / \put {$\stD \f{XD+n+\ell-1}{\ell-2} $} [r] <+1mm,+4mm> at -03 +30

\SC -60  00 \plotSA +01 -20 +01 -40 / \put {$\stD \f{(\ell-2)[R(\!XD - n+1) + \ell-1]}{(n+\ell-3)(n+\ell-2)} $} [l] <-1mm,-4mm> at +03 -30
\SC  00  00 \plotSA +01 -20 +01 -40 / \put {$\stD \f{(\ell-2)[R(\!XD - n  ) + \ell-1]}{(n+\ell-2)(n+\ell-1)} $} [l] <-1mm,-4mm> at +03 -30
\SC  60  00 \plotSA +01 -20 +01 -40 / \put {$\stD \f{(\ell-2)[R(\!XD - n-1) + \ell-1]}{(n+\ell-1)(n+\ell  )} $} [l] <-1mm,-4mm> at +03 -30
\SC -60  60 \plotSA +01 -20 +01 -40 / \put {$\stD \f{ \ell   [R(\!XD - n+1) + \ell+1]}{(n+\ell-1)(n+\ell  )} $} [l] <-1mm,-4mm> at +03 -30
\SC  00  60 \plotSA +01 -20 +01 -40 / \put {$\stD \f{ \ell   [R(\!XD - n  ) + \ell+1]}{(n+\ell  )(n+\ell+1)} $} [l] <-1mm,-4mm> at +03 -30
\SC  60  60 \plotSA +01 -20 +01 -40 / \put {$\stD \f{ \ell   [R(\!XD - n-1) + \ell+1]}{(n+\ell+1)(n+\ell+2)} $} [l] <-1mm,-4mm> at +03 -30

\SC  00  00  \plotSA -19.5 +20.5 -39.5 +40.5 / \put {$\stD \f D \ell    $} [lt] at -35  40
\SC  60 -60  \plotSA -19.5 +20.5 -39.5 +40.5 / \put {$\stD \f D{\ell-2} $} [lt] at -35  40

\SC  00 00 \plotSA +19.5 -20.5 +39.5 -40.5 / \put {$\stD \f{(\ell-2)[(\ell-1)X+RD]}{(n+1)(n+\ell-1)} $} [rb] <4mm,-5mm> at +35 -40
\SC -60 60 \plotSA +19.5 -20.5 +39.5 -40.5 / \put {$\stD \f{ \ell   [(\ell+1)X+RD]}{ n   (n+\ell  )} $} [rb] <4mm,-5mm> at +35 -40


\endpicture
\end{equation}

\mip
The norm factors were not incorporated in the first Legendre-Gegenbauer diagram (\ref{Legendre spaces grid local}), because
then the scalar commutator conditions (\ref{commutator condition Laguerre horizontal}), (\ref{commutator condition Laguerre vertical}) and (\ref{commutator condition Laguerre diagonal}) are destroyed.
We can read off some of the classical identities for Legendre-Gegenbauer polynomials from the diagram
(\ref{Legendre polynomials grid local}).

\begin{theorem}[Legendre-Gegenbauer polynomials]
For the Legendre-Gegenbauer polynomials the following relations hold
\begin{itemize}
\item[(i)] Three term recurrence relation.
\begin{eqnarray*}
( n + 1 ) P_{n+1}^\ell + ( n + \ell - 1 ) P_{n-1}^\ell &=& (2n+\ell)X P_n^\ell.
\end{eqnarray*}
\item[(ii)] There are various so called three-point-rules
\begin{eqnarray*}
( n+\ell) P_n^\ell 
&=& ( XD + n + \ell ) P_n^\ell - XD P_n^\ell \\ 
&=&  \ell P_n^{\ell+2} -\ell X P_{n-1}^{\ell+2} \\[1.5ex] 
    n X P_n^\ell     
&=& ( n X - R D ) P_n^\ell + R D P_n^\ell \\   
&=& ( n + \ell-1 ) P_{n-1}^\ell + \ell R P_{n-1}^{\ell+2}  \\[1.5ex]
    ( n+\ell) P_n^\ell 
&=& ( n+\ell + R D ) P_n^\ell - R D P_n^\ell 
\\&=& (n+1) P_{n+1}^{\ell} - \ell R P_{n-1}^{\ell+2} \\[1.5ex]
    ( \ell - 1 ) ( \ell - 2) X P_n^\ell 
&=& ( \ell - 2) [ ( \ell - 1 ) X + R D ] P_n^\ell - (\ell-2) R D P_n^\ell \\ 
&=& (n+1) (n+\ell-1) P_{n+1}^{\ell-2} - \ell(\ell-2) R P_{n-1}^{\ell+2}
\end{eqnarray*}
\item[(iii)] Rodrigues formula.
For an index pair $(n,\ell) \in \N \x (2\N +1)$ the Legendre-Gegenbauer polynomial $P_n^\ell$ is given by
\begin{eqnarray*}
       P_n^\ell
&\eq& \f{(\ell+2n-2) \cd (\ell+2n-4) \cd \dots \cd \ell} {(\ell+2n-1) \cd \dots \cd (\ell+n) \cd n!} \;
                                                                     R^{-\2{\ell-1}}\, D^n R^{n+\2{\ell-1}} \, 1
\end{eqnarray*}
\end{itemize}
\end{theorem}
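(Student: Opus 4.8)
The proof is essentially a matter of reading operator identities off the normalized diagram~(\ref{Legendre polynomials grid local}). The plan is first to note that the arrow labels in~(\ref{Legendre polynomials grid local}) are exactly the well--defined arrows of the Legendre--Gegenbauer grid (the previous theorem), each divided by the ratio of the chosen normalizations of the two Gegenbauer polynomials it joins; since in the standard representation every node $W_{n\ell}$ is one--dimensional and spanned by $P_n^\ell$, each such arrow is a genuine scalar operator identity between the $P_n^\ell$. All three parts then follow by combining a handful of these identities.

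\textbf{(i)} The East arrow at $P_n^\ell$ reads $(n+1)P_{n+1}^\ell=[(n+\ell)X+RD]P_n^\ell$ and the West arrow reads $(n+\ell-1)P_{n-1}^\ell=(nX-RD)P_n^\ell$. Adding the two, the $RD$--terms cancel and leave $(2n+\ell)XP_n^\ell$, which is the three term recurrence. \textbf{(ii)} Besides the horizontal arrows, the extra ingredient is the Northwest diagonal arrow, which in the bare grid~(\ref{Legendre spaces grid local}) is simply $D$, hence after normalization reads $\ell P_{n-1}^{\ell+2}=DP_n^\ell$. Each of the four three--point rules is obtained by taking the operator on one of the North, West, East or Southeast arrows, splitting off the term in which $D$ occurs, and rewriting that term through the Northwest identity: e.g.\ the North arrow gives $\ell P_n^{\ell+2}=(XD+n+\ell)P_n^\ell$, and subtracting $XD\,P_n^\ell=\ell X P_{n-1}^{\ell+2}$ gives the first rule; the remaining three use the West, East and Southeast arrows, the last one applying the Northwest identity after multiplication by $(\ell-2)R$.

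\textbf{(iii)} The restriction $(n,\ell)\in\N\times(2\N+1)$ is made so that $j:=\2{\ell-1}\in\N_0$ and $R^{\pm j}$ is an honest power. The plan is to realize $P_n^\ell$ through the chain: multiply $1$ by $R^{n+j}$, apply $D^n$, then multiply by $R^{-j}$. Using the commutator relation~(\ref{commutator DR}) one checks directly that $R^{n+j}=(X^2-1)^{n+j}$ is annihilated by $H_{2n+\ell-1,\,2-2n-\ell}$, so it spans the polynomial solution at that node. Since the Legendre--Gegenbauer grid is an SIE grid, its Northwest arrow $D$ maps $\ker H_{n',\ell'}$ into $\ker H_{n'-1,\,\ell'+2}$, so $D^nR^{n+j}$ is (up to scalar) the polynomial solution at the node $(n+\ell-1,\,2-\ell)$. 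Finally a short computation with~(\ref{commutator DR}) yields the exact conjugation $H_{n+\ell-1,\,2-\ell}\,R^{j}=R^{j}\,H_{n\ell}$, equivalently $H_{n\ell}\,R^{-j}=R^{-j}\,H_{n+\ell-1,\,2-\ell}$, so that $R^{-j}$ carries $\ker H_{n+\ell-1,\,2-\ell}$ into $\ker H_{n\ell}$. Composing the three maps, $R^{-j}D^nR^{n+j}\cdot 1$ is a nonzero degree--$n$ polynomial in $\ker H_{n\ell}=\lgl P_n^\ell\rgl$, hence a scalar multiple of $P_n^\ell$. The scalar is the quotient of the leading coefficient $\f{(\ell+2n-2)(\ell+2n-4)\cdots\ell}{n!}$ of $P_n^\ell$ by the leading coefficient of $R^{-j}D^nR^{n+j}\cdot 1$; the latter is the falling factorial $(\ell+2n-1)(\ell+2n-2)\cdots(\ell+n)$ obtained by differentiating the top term $x^{2n+\ell-1}$ of $(X^2-1)^{n+j}$ exactly $n$ times and dividing by $x^{\ell-1}$, and this gives precisely the constant in the statement.

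There is no deep obstacle: once the two grid diagrams are available, everything reduces to short Weyl--algebra computations. The one identity that is not read directly off a picture is the reflection across $\ell=1$; in the Laguerre case (part~(iv) of the theorem on associated Laguerre polynomials) this needed an induction, but here it collapses to the one--line conjugation $H_{n+\ell-1,\,2-\ell}R^{j}=R^{j}H_{n\ell}$, so the expected sources of error are instead purely arithmetical — keeping the node indices of the Rodrigues path straight, namely $(2n+\ell-1,\,2-2n-\ell)\to(n+\ell-1,\,2-\ell)\to(n,\ell)$, and the final leading--coefficient bookkeeping.
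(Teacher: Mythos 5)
Your parts (i) and (ii) coincide with the paper's treatment: (i) is exactly the paper's argument (add the East and West arrow identities of the normalized diagram (\ref{Legendre polynomials grid local}) so that the $RD$ terms cancel), and (ii), which the paper does not even write out, you correctly reduce to the Northwest identity $\ell P_{n-1}^{\ell+2}=DP_n^\ell$ combined with the North, West, East and Southeast arrows. For (iii) you take a genuinely different route. The paper proves the chain of three equal expressions around (\ref{three equations}): it starts from $1=P_0^{\ell+2n}$, walks $n$ steps along the Southeast diagonal, converts the resulting operator product into $D^{n+\ell-1}R^{n+\2{\ell-1}}$ by an induction whose base case is $(RD+2X)\cdots(RD+2nX)=D^nR^n$ at $\ell=1$ and whose step uses the Northwest arrow $D/\ell$, and only then passes to the stated form $R^{-\2{\ell-1}}D^nR^{n+\2{\ell-1}}$ by a polynomial comparison. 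You instead start at the bottom of the grid with $R^{n+j}\in\ker H_{2n+\ell-1,\,2-2n-\ell}$ (where $j=\2{\ell-1}$), ride the Northwest arrow $D$ up $n$ steps into $\ker H_{n+\ell-1,\,2-\ell}$, and reflect across $\ell=1$ via the conjugation $H_{n\ell}R^{-j}=R^{-j}H_{n+\ell-1,\,2-\ell}$. I verified both the membership $H_{2m,\,1-2m}R^{m}=0$ and the conjugation identity; they are correct, as is your leading-coefficient bookkeeping, so the argument goes through. What your route buys is a cleaner final step: the conjugation identity yields the kernel containment outright, replacing the paper's comparison of the two polynomials $\1{(n+\ell-1)!}R^{j}D^{n+\ell-1}R^{n+j}1$ and $\1{n!}D^nR^{n+j}1$ via leading coefficients, which is the least transparent part of the paper's proof; it is also the exact analogue of the Laguerre reflection (iv), done in one line instead of by induction. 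What the paper's route buys is that it stays entirely inside already-established grid arrows and records the intermediate closed form $D^{n+\ell-1}R^{n+j}\,1$. The one point you should make explicit is that $D^nR^{n+j}1$ is divisible by $R^{j}$ (each application of $D$ lowers the $R$-power of any term by at most one), so that $R^{-j}D^nR^{n+j}1$ is an honest polynomial and applying $R^{-j}$ is legitimate under the paper's standing injectivity assumption on $R$; with that one line added your proof is complete.
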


\begin{proof} \
(i) By looking at the two horizontal arrows in East and West direction starting at $P_n^\ell$
in (\ref{Legendre polynomials grid local}) we get the two equations
\begin{eqnarray*}
( n + 1        ) P_{n+1}^\ell &=& [(n+\ell) X + RD ) P_n^\ell \\
( n + \ell - 1 ) P_{n-1}^\ell &=& [ n       X - RD ) P_n^\ell .
\end{eqnarray*}
Adding up the two equations yields the three term recurrence relation.
\mip
(iii) (1) We are going to show the two following equalities
\begin{eqnarray}
\nn      P_n^\ell
&\eq& \f{\ell \cd(RD + (\ell+1)X)}{n(n+\ell)} \cd \f{(\ell+2) \cd(RD + (\ell+3)X)}{(n-1)(n+\ell+1)} \cd \dots  \\
&   & \hs{1cm} \dots \cd \f{(\ell+2n-2) \cd (RD + (\ell+2n-1)X)}{1 \cd (2n+\ell-1)} \, 1 \\
\label{three equations}
&\eq& \f{(\ell+2n-2) \cd (\ell+2n-4) \cd \dots \cd \ell} {(\ell+2n-1)!} \, D^{n+\ell-1} \, R^{n+\2{\ell-1}} \; 1 \\
\nn
&\eq& \f{(\ell+2n-2) \cd (\ell+2n-4) \cd \dots \cd \ell} {(\ell+2n-1) \cd \dots \cd (\ell+n) \cd n!} \,
                                                                     R^{-\2{\ell-1}}\, D^n R^{n+\2{\ell-1}} \, 1
\end{eqnarray}
(2) The first equation is clear, since the operator on the right hand side belongs to the $n$ step southeast diagonal path
\begin{eqnarray*}
&& 1  \eq P_0^{\ell+2n} \q\llongrightarrow\q P_1^{\ell+2n-2} \q\llongrightarrow\q \dots \q\llongrightarrow\q P_n^{\ell}
\end{eqnarray*}
See the diagrams (\ref{Legendre spaces grid global}) and (\ref{Legendre polynomials grid local}).
\mip
(3) We prove the second equation for $\ell = 1$. It is valid within the Weyl algebra, independent of the representation.
Since this is clear for the scalar factors it remains to prove by induction over $n \in\N$ that
\begin{eqnarray*}
& & (RD + 2X) \cd (RD + 4X) \cd \dots \cd (RD + 2nX) \eq D^n R^n
\end{eqnarray*}
For $n=1$ we have
\begin{eqnarray*}
RD + 2X &\eq& DR.
\end{eqnarray*}
and then
\begin{eqnarray*}
&   & (RD + 2X) \cd (RD + 4X) \cd \dots \cd \cd (RD + 2nX) \cd (RD + 2(n+1)X) \\
&\eq& D^n R^n \cd (RD + 2(n+1)X) \\
&\eq& D^n ( R^{n+1}D + 2(n+1) X R^n) \\
&\eq& D^{n+1} R^{n+1}
\end{eqnarray*}

(4) We now perform the induction step $(n,\ell) \to (n-1,\ell+2)$. Here we have
\begin{eqnarray*}
    P_{n-1}^{\ell+2}
\eq \f D\ell \, P_n^\ell
&=& \f D\ell \f{(\ell+2n-2)(\ell+2n-4) \cd \dots \cd \ell}{(\ell+2n-1)!} D^{n+\ell-1} R^{\2{\ell+2n-1}} \, 1 \\
&=&          \f{(\ell+2n-2)(\ell+2n-4) \cd \dots \cd (\ell+2)}{(\ell+2n-1)!} D^{n+\ell} R^{\2{\ell+2n-1}}\, 1.
\end{eqnarray*}
So far we have proved the two upper lines in (\ref{three equations}) for all $n\in\N$, $\ell \in 2\N+1$.
\mip
(5) We prove the third identity in (\ref{three equations}), it is
\begin{eqnarray*}
\label{Legendre Rodrigues 1}
\1{(n+\ell-1)!} R^{\2{\ell-1}} D^{n+\ell-1} R^{n+\2{\ell-1}} \, 1 &\eq& \1{n!} D^n R^{n+\2{\ell-1}} \, 1.
\end{eqnarray*}
This is an equation between two polynomials, we check it by applying the derivative operator $r$ times for all $r = 0,\dots,n$
\begin{eqnarray*}
\label{Legendre Rodrigues 1}
\1{(n+\ell-1)!} D^r R^{\2{\ell-1}} D^{n+\ell-1} R^{n+\2{\ell-1}} \, 1 &\eq& \1{n!} D^{n+r} R^{n+\2{\ell-1}} \, 1.
\end{eqnarray*}
and then comparing the leading coefficient. In fact it is
\begin{eqnarray*}
\f{(2n+\ell-1)!}{n! \cd (n+\ell-1-r)!}
\end{eqnarray*}
on both sides. So our claim follows.
\end{proof}

\section{The $h$-Weyl algebra}
Now for fixed $h \not= 0$ we consider the so called \emph{$h$-Weyl algebra}. It is generated as an associative unital $\C$ algebra by three operators $M,D,X$ with the four relations
\begin{equation}\label{rel h Weyl algebra}
[M,D] = 0, \q
[M,X] = h^2 D, \q
[D,X] = M, \q
M^2 - h^2 D^2 = 1.
\end{equation}
It is not hard to derive the following additional relation
\begin{eqnarray}
\label{hWeyl q} D X M - M X D &=& 1
\end{eqnarray}
\mip
The $h$--Weyl algebra has two particular representations. The first one, called diff representation is
on a space of functions (or appropriate subspace) with a discrete variable
\begin{eqnarray}\label{Diff representation hWeyl algebra}
\cF( h\Z,\C), \q\q\q
\left\{
\begin{array}[c]{ r  c  l }
M f(x) &\defeq& \2{f(x+h) + f(x-h)}
\\[1.3ex]
D f(x) &\defeq& \f{f(x+h) - f(x-h)}{2h}
\\[1.3ex]
X f(x) &\defeq& x \cd f(x).
\end{array}
 \right.
\end{eqnarray}
The symbols $M$ (mix), $D$ (difference) and $X$ arise from this difference operator  representation.
The second representation, called trig representation, is related to the first one by Fourier transform and Pontryagin duality. The $h$--Weyl algebra
acts on smooth functions on a circle with radius $\1h$ ( $= \f{2\pi}h$--periodic functions on $\R$) (or appropriate subspace)
\begin{eqnarray}\label{Trig representation hWeyl algebra}
\cC^\8({\stB \1h}\S,\C), \q\q\q
\left\{
\begin{array}[c]{ r  c  l }
M f(x) &\defeq& \cos(h x) f(x)
\\[1.3ex]
D f(x) &\defeq& \f{i \sin(hx)}{h} f(x)
\\[1.3ex]
 X f(x) &\defeq& i f'(x).
\end{array}
\right.
\end{eqnarray}
For $M = I$ and $h = 0$ the $h$--Weyl algebra reduces to the original Weyl algebra. The two representations reduce to the standard representation of the Weyl algebra.

\section{The Binomial grid}

From now on we will only consider the $h$-Weyl algebra with $h=1$.
We define the following operators
\begin{eqnarray*}
\label{def Gj} G_j &\defeq& j D +  MX \eq (j+1) D +     XM \\
\label{def Lj} L_j &\defeq& j M +  DX \eq (j+1) M +  XD
\end{eqnarray*}
and note some relations
\begin{eqnarray}
\label{ToR 1} L_j D - D L_{j-1} 
&=& (j M +  DX) D - D (j M +  XD)  \\ 
\nn &=& 0 \\[1.5ex] 
\label{ToR 2} G_j M - M G_{j-1}         
&=& (j D + MX) M - M (j D + XM)  \\ 
\nn &=& 0 \\[1.5ex]
\nn           G_{j-1} L_j - L_{j-1} G_j 
&=& (j D + XM) (j M +  DX) - (j M +  XD)(j D + MX)       \\
\label{ToR 3}                           
&=& j X ( M^2 -  D^2) + j (  D^2 - M^2) X \\ 
\nn &=& 0 \\[1.5ex]
\nn       L_{j-1} L_j -  G_{j-1} G_j    
&=& (j M +  XD)(j M +  DX) -  (j D + XM)(j D + MX) \\
\nn                                     
\nn &=& j^2 (M^2 -D^2) + X ( D^2 -M^2) X  \\
\nn & & \q + \, j  (MDX -DMX + XDM - XMD) \\ 
\label{ToR 4}  &=& j^2 - X^2
\end{eqnarray}
Define for $n,m \in \Z$ the (second order difference) \emph{binomials operators}
\begin{eqnarray}
\label{definition Rnm}
H_{nm} \defeq H_{nm}^\7{bin}
       &\defeq&               (n+m+1)  M^2    +  DXM - (n+1) \\
\nn    &\sr{(\ref{hWeyl q})}=& (n+m+1)  M^2    +  MXD -  n    \\
\nn    &=&                    (n+m+1) (M^2-1) +  MXD + (m+1) \\
\nn    &=&                    (n+m+1)  D^2    +  MXD + (m+1) \\
\nn    &\sr{(\ref{hWeyl q})}=& (n+m+1)  D^2    +  DXM +  m
\end{eqnarray}
in the $1$-Weyl algebra and then the spaces
\begin{eqnarray*}
U_{nm} \defeq \ker H_{nm}
\end{eqnarray*}
with respect to any representation. The following diagram shows the local structure of the binomial grid.
\begin{equation}
\label{binomial grid local}
\def\SC #1 #2 {\setcoordinatesystem units <.5mm,.5mm> point at {-#1} {-#2} }
\def\plotSA #1 #2 #3 #4 /{ \arrow <1mm> [.25,.75] from #1 #2 to #3 #4 }
\beginpicture
\SC  00  00 \setplotarea x from -70 to +70, y from -70 to +70

\SC -60 -60 \put { $U_{n-1,m-1}$} [] at 00 00
\SC -60  00 \put { $U_{n-1,m  }$} [] at 00 00
\SC -60 +60 \put { $U_{n-1,m+1}$} [] at 00 00
\SC  00 -60 \put { $U_{n  ,m-1}$} [] at 00 00
\SC  00  00 \put { $U_{n  ,m  }$} [] at 00 00
\SC  00 +60 \put { $U_{n  ,m+1}$} [] at 00 00
\SC +60 -60 \put { $U_{n+1,m-1}$} [] at 00 00
\SC +60  00 \put { $U_{n+1,m  }$} [] at 00 00
\SC +60 +60 \put { $U_{n+1,m+1}$} [] at 00 00

\SC  00  00 \plotSA +20  +01  +40  +01 / \put {$\stC M  $} [b] at +30  02
\SC  00  60 \plotSA +20  +01  +40  +01 / \put {$\stC M  $} [b] at +30  02
\SC -60  60 \plotSA +20  +01  +40  +01 / \put {$\stC M  $} [b] at +30  02
\SC -60 -60 \plotSA +20  +01  +40  +01 / \put {$\stC M  $} [b] at +30  02
\SC -60  00 \plotSA +20  +01  +40  +01 / \put {$\stC M  $} [b] at +30  02
\SC  00 -60 \plotSA +20  +01  +40  +01 / \put {$\stC M  $} [b] at +30  02

\SC  00  00 \plotSA -20  -01  -40  -01 / \put {$\stC L_{n+m  } $} [t] at -30 -02
\SC  60  00 \plotSA -20  -01  -40  -01 / \put {$\stC L_{n+m+1} $} [t] at -30 -02
\SC  60  60 \plotSA -20  -01  -40  -01 / \put {$\stC L_{n+m+2} $} [t] at -30 -02
\SC  00  60 \plotSA -20  -01  -40  -01 / \put {$\stC L_{n+m+1} $} [t] at -30 -02
\SC  00 -60 \plotSA -20  -01  -40  -01 / \put {$\stC L_{n+m-1} $} [t] at -30 -02
\SC  60 -60 \plotSA -20  -01  -40  -01 / \put {$\stC L_{n+m  } $} [t] at -30 -02

\SC  00  00 \plotSA -01  +20  -01  +40 / \put {$\stC - D $} [r] at -03 +30
\SC  60  00 \plotSA -01  +20  -01  +40 / \put {$\stC - D $} [r] at -03 +30
\SC -60  00 \plotSA -01  +20  -01  +40 / \put {$\stC - D $} [r] at -03 +30
\SC -60 -60 \plotSA -01  +20  -01  +40 / \put {$\stC - D $} [r] at -03 +30
\SC  00 -60 \plotSA -01  +20  -01  +40 / \put {$\stC - D $} [r] at -03 +30
\SC  60 -60 \plotSA -01  +20  -01  +40 / \put {$\stC - D $} [r] at -03 +30

\SC  00  00 \plotSA +01  -20  +01  -40 / \put {$\stC G_{n+m  } $} [l] at +03 -30
\SC  60  00 \plotSA +01  -20  +01  -40 / \put {$\stC G_{n+m+1} $} [l] at +03 -30
\SC  60  60 \plotSA +01  -20  +01  -40 / \put {$\stC G_{n+m+2} $} [l] at +03 -30
\SC  00  60 \plotSA +01  -20  +01  -40 / \put {$\stC G_{n+m+1} $} [l] at +03 -30
\SC -60  60 \plotSA +01  -20  +01  -40 / \put {$\stC G_{n+m  } $} [l] at +03 -30
\SC -60  00 \plotSA +01  -20  +01  -40 / \put {$\stC G_{n+m-1} $} [l] at +03 -30
\endpicture
\end{equation}

\begin{theorem}[Binomial grid]
The Binomial grid is a well defined SIE grid.
The various circuit endomorphisms $U_{nm} \to U_{nm}$ act as scalars as follows
\begin{eqnarray*}
\begin{array}[c]{ l @{\q} r @{\;\;=\;\;}  l }
\mbox{$\rightleftharpoons$ East}  & L_{n+m+1} M     & n+1 \\[1.5ex]
\mbox{$\rightleftharpoons$ West}  & M L_{n+m}       & n   \\[1.5ex]
\mbox{$\rightleftharpoons$ North} & G_{n+m+1} (- D) & m+1 \\[1.5ex]
\mbox{$\rightleftharpoons$ South} & (- D) G_{n+m  } & m   \\[1.5ex]
\mbox{$\Box \circlearrowleft $} & G_{n+m+1} \, L_{n+m+2} \, (-D) \, M & (n+1) (m+1) \\[1.5ex]
\mbox{$\Box \circlearrowright$} & L_{n+m+1} \, G_{n+m+2} \, M \, (-D) & (n+1) (m+1)
\end{array}
\end{eqnarray*}
\end{theorem}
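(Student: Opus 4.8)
The plan is to run the same two-step scheme that worked for the Laguerre and Legendre-Gegenbauer grids: first build the horizontal and vertical SIE subladders using the scalar-commutator lemma (Lemma \ref{Lemma: Scalar commutators}), and then obtain the two diagonal square endomorphisms from the narrow ones.

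First I would record the elementary identities tying the arrow compositions to $H_{nm}$. Using the two shapes $L_j = jM + DX = (j+1)M + XD$ of the $L$-operator together with the five equivalent forms of $H_{nm}$ displayed above, one reads off directly
\[ L_{n+m+1}\,M = (n+m+1)M^2 + DXM = H_{nm} + (n+1), \qquad M\,L_{n+m} = (n+m+1)M^2 + MXD = H_{nm} + n, \]
and symmetrically, from $G_j = jD + MX = (j+1)D + XM$,
\[ G_{n+m+1}(-D) = -(n+m+1)D^2 - MXD = -H_{nm} + (m+1), \qquad (-D)\,G_{n+m} = -(n+m+1)D^2 - DXM = -H_{nm} + m. \]
Hence, for the horizontal ladder (fixed $m$) the two narrow circuit operators $A_{nm}^\Lq = M L_{n+m}$ and $A_{nm}^\Rq = L_{n+m+1}M$ satisfy $A_{nm}^\Rq - A_{nm}^\Lq = 1 = \aa_{n+1,m} - \aa_{nm}$ with $\aa_{nm} = n$; for the vertical ladder (fixed $n$) one gets $B_{nm}^\Rq - B_{nm}^\Lq = 1 = \ba_{n,m+1} - \ba_{nm}$ with $\ba_{nm} = m$.

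Next I would apply Lemma \ref{Lemma: Scalar commutators} to each horizontal and each vertical ladder. Because $M L_{n+m} = H_{nm} + n$, the eigenspace $\eig(M L_{n+m}, n)$ is exactly $\ker H_{nm} = U_{nm}$, and likewise $\eig((-D)G_{n+m}, m) = U_{nm}$; so the lemma simultaneously shows that $M$ and $L$ map $U_{nm}$ into $U_{n\pm 1,m}$, that $-D$ and $G$ map $U_{nm}$ into $U_{n,m\pm 1}$, and that the East, West, North, South narrow circuit endomorphisms of $U_{nm}$ act as $n+1$, $n$, $m+1$, $m$, respectively. This already proves that the binomial grid is well defined and gives the first four lines of the table.

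Finally, for the two square endomorphisms I would use that $D$ and $M$ commute in the $1$-Weyl algebra, so the square of spaces is genuinely (not merely up to a scalar) commutative: $(-D)M = M(-D)$. Writing $\Box\circlearrowleft = G_{n+m+1}\,(L_{n+m+2}M)\,(-D)$, the bracketed factor is the East narrow circuit at $U_{n,m+1}$, acting there as $n+1$, so the composite reduces to $(n+1)\,G_{n+m+1}(-D)$, the North circuit at $U_{nm}$, acting as $m+1$; the product is $(n+1)(m+1)$. The clockwise square $\Box\circlearrowright$ is handled the same way, factoring as the North circuit at $U_{n+1,m}$ followed by the East circuit at $U_{nm}$; alternatively both squares are instances of Lemma \ref{Wide circuit endomorphisms}. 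Since the only arrows in the grid are the $M$, $L$, $-D$, $G$ arrows and every short circuit $b^\opp b$ is one of the East/West/North/South circuits just computed, Lemma \ref{scalars equal}(iv) then yields that the whole grid is SIE. The only real obstacle is bookkeeping: one must keep the index shift $n+m$ (resp.\ $n+m+1$, $n+m+2$) attached to the correct $G$ or $L$ arrow along each path, since a single misplaced shift would corrupt the entire table.
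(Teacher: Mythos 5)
Your proposal is correct and follows essentially the same route as the paper: express the four narrow circuit operators as $\pm H_{nm}$ plus a scalar, invoke Lemma \ref{Lemma: Scalar commutators} with $\aa_{nm}=n$ and $\ba_{nm}=m$ to get the well-defined horizontal and vertical SIE subladders, and then reduce each square to a product of two narrow circuits. The only (immaterial) difference is in the square step, where you commute $(-D)$ and $M$ via the defining relation $[M,D]=0$ and evaluate the East circuit at the shifted vertex $U_{n,m+1}$, whereas the paper slides the operators using the derived relations $L_jD=DL_{j-1}$ and $G_jM=MG_{j-1}$ so that both narrow circuits sit at $U_{nm}$; both bookkeepings yield $(n+1)(m+1)$.
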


\begin{proof}
(1) The horizontal narrow circuit endomorphisms are
\begin{eqnarray*}
A_{nm}^\Rq &=& L_{n+m+1} M \eq (n+m+1) M^2 +  DXM \eq  H_{nm} + (n+1) \\[1.5ex]
A_{nm}^\Lq &=& M L_{n+m}   \eq (n+m+1) M^2 +  MXD \eq  H_{nm} +  n
\end{eqnarray*}

When choosing the number sequence $\aa_{nm} = n$, we find that the commutator condition (\ref{commutator condition}) in
Lemma \ref{Lemma: Scalar commutators} is fulfilled,
\begin{eqnarray}
\label{commutator condition binomials horizontal}
A_{nm}^\Rq - A_{nm}^\Lq &\eq& 1 \eq \aa_{n+1,m} - \aa_{nm}.
\end{eqnarray}
Lemma \ref{Lemma: Scalar commutators} shows that the horizontal ladders
\begin{equation}
\label{Binomials horizontal ladder}
\beginpicture
\SC  00 00 \setplotarea x from -55 to +55, y from -03 to 03 
\SC -50 00 \put {$\dots$}   [] at 00 00
\SC +50 00 \put {$\dots$}   [] at 00 00
\SC -30 00 \put {$U_{n-1,m}$} [] <0mm,0mm> at -05 00
\SC  00 00 \put {$U_{n   m}$} [] at 00 00
\SC +30 00 \put {$U_{n+1,m}$} [] <0mm,0mm> at +05 00
\SC -15 00 \arrow <1mm> [.25,.75] from -05 01 to +05  01 \put {\tiny $M$} [b] at 00  02
           \arrow <1mm> [.25,.75] from +05 00 to -05  00 \put {\tiny $L_{n+m}$} [t] at 00 -01
\SC +15 00 \arrow <1mm> [.25,.75] from -05 01 to +05  01 \put {\tiny $M$} [b] at 00  02
           \arrow <1mm> [.25,.75] from +05 00 to -05  00 \put {\tiny $L_{n+m+1}$} [t] at 00 -01
\endpicture
\end{equation}
in the binomial grid (\ref{binomial grid local}) are well defined SIE subladders.
Within the diff representation (\ref{Diff representation hWeyl algebra}) the operator $M$ is the discrete heat distribution
operator. So the above ladder can be called the Heat ladder.
\mip
(2) The vertical circuit endomorphisms in North and South direction are:
\begin{eqnarray*}
B_{n\ell}^\Rq &=& G_{n+m+1} (- D) \eq - (n+m+1)  D^2 -  MXD \\ 
&=& - H_{nm} + (m+1) \\
B_{n\ell}^\Lq &=& (- D)   G_{n+m} \eq - (n+m+1)  D^2 -  DXM \\ 
&=& - H_{nm} +  m
\end{eqnarray*}
The difference of the two circuit endomorphisms is again a scalar,
\begin{eqnarray}
\label{commutator condition binomials horizontal}
B_{nm}^\Rq - B_{nm}^\Lq &\eq& 1 \eq \ba_{n,m+1} - \ba_{nm},
\end{eqnarray}
where $\ba_{nm} = m$.
Lemma \ref{Lemma: Scalar commutators} again implies that the vertical ladders
\begin{equation}
\label{Binomials vertical ladder}
\beginpicture
\SC  00 00 \setplotarea x from -55 to +55, y from -03 to 03
\SC -50 00 \put {$\dots$}   [] at 00 00
\SC +50 00 \put {$\dots$}   [] at 00 00
\SC -30 00 \put {$U_{n,m-1}$} [] <0mm,0mm> at -05 00
\SC  00 00 \put {$U_{n m  }$} [] at 00 00
\SC +30 00 \put {$U_{n,m+1}$} [] <0mm,0mm> at +05 00
\SC -15 00 \arrow <1mm> [.25,.75] from -05 01 to +05  01 \put {\tiny $-D$} [b] at 00  02
           \arrow <1mm> [.25,.75] from +05 00 to -05  00 \put {\tiny $G_{n+m}$} [t] at 00 -01
\SC +15 00 \arrow <1mm> [.25,.75] from -05 01 to +05  01 \put {\tiny $-D$} [b] at 00  02
           \arrow <1mm> [.25,.75] from +05 00 to -05  00 \put {\tiny $G_{n+m+1}$} [t] at 00 -01
\endpicture
\end{equation}

in the binomial grid (\ref{binomial grid local}) are well defined.

\mip
(3) We use (\ref{ToR 1}), (\ref{ToR 2}) and steps (1) and (2) in order to compute the Northeast square circuit
endomorphism on $U_{nm}$ in counterclockwise and clockwise direction
\begin{eqnarray*}
\bq\bq G_{n+m+1} \, L_{n+m+2} \, (-D)      \, M
&=& G_{n+m+1} \, (-D)      \, L_{n+m+1} \, M    \eq (n+1) (m+1) \\[1.5ex]
\bq\bq      L_{n+m+1} \, G_{n+m+2} \,  M        \, (-D)
&=& L_{n+m+1} \, M         \, G_{n+m+1} \, (-D) \eq (n+1) (m+1)
\end{eqnarray*}
The proof is finished.
\end{proof}

\section{The Binomials}
The relations
\begin{eqnarray*}
G_0 &=& MX \\
L_0 &=& DX \\
X   &=& (M+D)(M-D) X \eq (M+D) (G_0 - L_0 ) \\
H_{00} &=& M L_0 \eq  M D X
\end{eqnarray*}
see (\ref{rel h Weyl algebra}), show that
\begin{eqnarray}
\label{def W00}
W_{00} \defeq \ker X \eq \ker G_0 \cap \ker L_0 \; \subseteq \ker H_{00} \eq U_{00}.
\end{eqnarray}
This subspace of $U_{00}$ generates an SIE subrepresentation $(W_{nm})$ of the grid $(U_{nm})$ by
\begin{eqnarray*}
W_{nm} \defeq M^n D^m (W_{00}).
\end{eqnarray*}

Within the trigonometric representation (\ref{Trig representation hWeyl algebra})
we have $W_{00}  = \ker X = \lgl 1 \rgl $, where $1$ is the constant--$1$ function on $\S$.
So the subrepresentation $(W_{nm})$ is given by
\begin{eqnarray*}
W_{nm} &=& \lgl \cos^{n}(x) \; \sin^m (x) \rgl.
\end{eqnarray*}
Within the Diff representation (\ref{Diff representation hWeyl algebra}) we have $W_{00} = \ker X = \lgl \da_0 \rgl $,
where $\da_z$ is the ``Kronecker Delta'' function on $\Z$.
The representation spaces $(W_{nm})$ contain centered binomial functions with alternating signs.
The horizontal ladder $(W_{n0})$ is the ladder of the classical centered binomials.
They are related to the discrete Harmonic Oscillator \cite{HilgerHOE}.
This representation $(W_{nm})$ is displayed in the following last diagram
\[
\bq\bq\bq\bq\bq\bq\bq
\beginpicture
\def\doarW#1{
\arrow <1mm> [.25,.75] from -10 -01 to -20 -01
\arrow <1mm> [.25,.75] from -20 +01 to -10 +01
\put {\tiny $  M     $} [b] at -15  02
\put {\tiny $ L_{#1} $} [t] at -15 -02
}
\def\doarS#1{
\arrow <1mm> [.25,.75] from -01 -20 to -01 -10
\arrow <1mm> [.25,.75] from  01 -10 to  01 -20
\put {\tiny $  -D    $} [r] at -02 -15
\put {\tiny $ G_{#1} $} [l] at +02 -15
}
\def\doarX#1{ }
\def\ya#1{ \put {$\stC \langle #1 \rangle$} [] at  00  00 }
\def\yb#1{ \put {$\stD \langle #1        $} [] at -01  02 }
\def\yc#1{ \put {$\stD         #1 \rangle$} [] at +01 -02 }
\def\yp#1{ \put {$\stD \langle #1        $} [] at -01  03 }
\def\yq#1{ \put {$\stD         #1        $} [] at -01  00 }
\def\yr#1{ \put {$\stD         #1 \rangle$} [] at +01 -03 }

\SC 00 00
\setplotarea x from 00 to 140, y from -10 to 120
\SC  00  00 \doarX{  }  \doarX{  }  \ya{    \da_{ 0}             }
\SC  30  00 \doarW{ 1}  \doarX{ 1}  \ya{    \da_{-1} +  \da_{ 1} }
\SC  60  00 \doarW{ 2}  \doarX{ 2}  \yb{    \da_{-2} + 2\da_{ 0} }
                                    \yc{  \q\q + \da_{ 2}             }
\SC  90  00 \doarW{ 3}  \doarX{ 3}  \yb{    \da_{-3} + 3\da_{-1} }
                                    \yc{ + 3\da_{ 1} +  \da_{ 3} }
\SC 120  00 \doarW{ 4}  \doarX{ 4}  \yb{    \da_{-4} + 4\da_{-2} }
                                    \yc{ + 6\da_{ 0} + 4\da_{ 2} + \da_{ 4} }
\SC  00  30 \doarX{ 1}  \doarS{ 1}  \ya{    \da_{-1} -  \da_{ 1} }
\SC  30  30 \doarW{ 2}  \doarS{ 2}  \ya{    \da_{-2} -  \da_{ 2} }
\SC  60  30 \doarW{ 3}  \doarS{ 3}  \yb{    \da_{-3} +  \da_{-1} }
                                    \yc{ -  \da_{ 1} -  \da_{ 3} }
\SC  90  30 \doarW{ 4}  \doarS{ 4}  \yb{    \da_{-4} + 2\da_{-2} }
                                    \yc{ - 2\da_{ 2} -  \da_{ 4} }
\SC 120  30 \doarW{ 5}  \doarS{ 5}  \yp{    \da_{-5} + 3\da_{-3} }
                                    \yq{ + 2\da_{-1} - 2\da_{-1} }
                                    \yr{ - 3\da_{-3} -  \da_{-5} }
\SC  00  60 \doarX{ 2}  \doarS{ 2}  \yb{    \da_{-2} - 2\da_{ 0} }
                                    \yc {  \q\q      +  \da_{ 2} }
\SC  30  60 \doarW{ 3}  \doarS{ 3}  \yb{    \da_{-3} -  \da_{-1} }
                                    \yc{ -  \da_{ 1} +  \da_{ 3} }
\SC  60  60 \doarW{ 4}  \doarS{ 4}  \yb{    \da_{-4} - 2\da_{ 0} }
                                    \yc{ \q\q         + \da_{ 4} }
\SC  90  60 \doarW{ 5}  \doarS{ 5}  \yp{    \da_{-5} +  \da_{-3} }
                                    \yq{ - 2\da_{-1} - 2\da_{ 1} }
                                    \yr{ +  \da_{ 3} +  \da_{ 5} }
\SC 120  60 \doarW{ 6}  \doarS{ 6}  \yp{    \da_{-6} + 2\da_{-4} \q }
                                    \yq{ -  \da_{-2} - 4\da_{ 0} - \da_{ 2} }
                                    \yr{ \q          + 2\da_{ 4} + \da_{ 6} }
\SC  00  90 \doarX{ 3}  \doarS{ 3}  \yb{    \da_{-3} - 3\da_{-1} }
                                    \yc{ + 3\da_{ 1} -  \da_{ 3} }
\SC  30  90 \doarW{ 4}  \doarS{ 4}  \yb{    \da_{-4} - 2\da_{-2} }
                                    \yc{ + 2\da_{ 2} -  \da_{ 4} }
\SC  60  90 \doarW{ 5}  \doarS{ 5}  \yp{    \da_{-5} -  \da_{-3} }
                                    \yq{ - 2\da_{-1} - 2\da_{ 1} }
                                    \yr{ +  \da_{ 3} -  \da_{ 5} }
\SC  90  90 \doarW{ 6}  \doarS{ 6}  \yb{    \da_{-6} - 3\da_{-2} }
                                    \yc{ + 3\da_{ 2} -  \da_{ 6} }
\SC 120  90 \doarW{ 6}  \doarS{ 6}  \yp{    \da_{-7} -  \da_{-5}  - 3\da_{-3} }
                                    \yq{ - 3\da_{-1} + 3\da_{ 1} }
                                    \yr{\q + 3\da_{ 3} - \da_5 - \da_7 }
\SC  00 120 \doarX{ 4}  \doarS{ 4}  \yb{    \da_{-4} - 3\da_{-2} }
                                    \yc{ + 6\da_{ 0} - 4\da_{ 2} + \da_4 }
\SC  30 120 \doarW{ 5}  \doarS{ 5}  \yp{    \da_{-5} - 3\da_{-3} }
                                    \yq{ + 2\da_{-1} + 2\da_{ 1} }
                                    \yr{ - 3\da_{ 3} +  \da_{ 5} }
\SC  60 120 \doarW{ 6}  \doarS{ 6}  \yp{    \da_{-6} - 2\da_{-4} \q }
                                    \yq{ -  \da_{-2} + 4\da_{ 0} - \da_2 }
                                    \yr{\q-2\da_{ 4} +  \da_{ 6} }
\SC  90 120 \doarW{ 7}  \doarS{ 7}  \yp{    \da_{-7} - \da_{-5} - 3\da_{-3} }
                                    \yq{ + 3\da_{-1} - 3\da_{ 1} }
                                    \yr{- 3\da_{ 3} - \da_{ 5} + \da_{ 7} }
\SC 120 120 \doarW{ 8}  \doarS{ 8}  \yb{    \da_{-8} - 4\da_{-4} }
                                    \yc{ + 6\da_{ 0} - 4\da_{ 4} - \da_{ 8} }

\SC  00  00 \put { $\stC n  = 0 $ } [t] at 00 -06
\SC  30  00 \put { $\stC n  = 1 $ } [t] at 00 -06
\SC  60  00 \put { $\stC n  = 2 $ } [t] at 00 -06
\SC  90  00 \put { $\stC n  = 3 $ } [t] at 00 -06
\SC 120  00 \put { $\stC n  = 4 $ } [t] at 00 -06

\SC  00  00  \put { $\stC m = 0$ } [r] at -06 -00
\SC  00  30  \put { $\stC m = 1$ } [r] at -08 -00
\SC  00  60  \put { $\stC m = 2$ } [r] at -08 -00
\SC  00  90  \put { $\stC m = 3$ } [r] at -08 -00
\SC  00 120  \put { $\stC m = 4$ } [r] at -08 -00
\endpicture
\]
There are some interesting identities for operators appearing in this subrepresentation $(W_{nm})$ of the binomial grid.

\begin{theorem}[Binomials]
Going two steps in negative direction within the horizontal ladders $m = 0,1$ or vertical ladders $n= 0,1$, respectively,
means multiplication by some quadratic polynomial. With (\ref{ToR 4}) we get
\begin{eqnarray*}
\begin{array}[c]{ l c l c l }
L_{n-1} L_{n  } &=& n^2 -X^2      &:& W_{n,0} \to W_{n-2,0} \\[1.2ex]
L_{n  } L_{n+1} &=& (n+1)^2 - X^2 &:& W_{n,1} \to W_{n-2,1} \\[1.2ex]
G_{m-1} G_{m  } &=& X^2 - m^2     &:& W_{0,m} \to W_{0,m-2} \\[1.2ex]
G_{m  } G_{m+1} &=& X^2 - (m+1)^2 &:& W_{1,m} \to W_{1,m-2}
\end{array}
\end{eqnarray*}
\end{theorem}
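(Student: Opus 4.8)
The plan is to deduce all four identities from the single algebra relation (\ref{ToR 4}), namely $L_{j-1}L_j - G_{j-1}G_j = j^2 - X^2$, by checking in each case that the \emph{other} product already annihilates the subspace in question. The preliminary facts I would record first are: iterating $L_jD = DL_{j-1}$ from (\ref{ToR 1}) and $G_jM = MG_{j-1}$ from (\ref{ToR 2}) gives $L_{j+k}D^k = D^kL_j$ and $G_{j+k}M^k = M^kG_j$ for all $j,k\ge 0$; moreover $L_0 = DX$ and $G_0 = MX$, so by (\ref{def W00}) both $L_0$ and $G_0$ annihilate $W_{00} = \ker X$.

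For the line $m = 0$: a typical element of $W_{n,0} = M^n(W_{00})$ is $v = M^nw$ with $w\in W_{00}$, and then $G_nv = M^nG_0w = 0$, hence $G_{n-1}G_nv = 0$, so (\ref{ToR 4}) with $j = n$ gives $L_{n-1}L_nv = (n^2 - X^2)v$. The line $n = 0$ is the mirror image: for $v = D^mw \in W_{0,m} = D^m(W_{00})$ one has $L_mv = D^mL_0w = 0$, hence $L_{m-1}L_mv = 0$, and (\ref{ToR 4}) with $j = m$ gives $G_{m-1}G_mv = (X^2 - m^2)v$.

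The lines $m = 1$ and $n = 1$ need one extra relation each: expanding and using $[M,D] = 0$, $[D,X] = M$, $[M,X] = D$ and $M^2 - D^2 = 1$ one obtains the cross-commutators $G_jD = DG_{j-1} - 1$ and $L_jM = ML_{j-1} + 1$. For $v = M^nDw \in W_{n,1}$ this gives $G_{n+1}v = M^nG_1Dw = M^n(DG_0 - 1)w = -M^nw$, so $G_nG_{n+1}v = -M^nG_0w = 0$, and (\ref{ToR 4}) with $j = n+1$ yields $L_nL_{n+1}v = ((n+1)^2 - X^2)v$. For $v = MD^mw \in W_{1,m}$ one gets $L_{m+1}v = (ML_m + 1)D^mw = MD^mL_0w + D^mw = D^mw$, so $L_mL_{m+1}v = D^mL_0w = 0$, and (\ref{ToR 4}) with $j = m+1$ yields $G_mG_{m+1}v = (X^2 - (m+1)^2)v$.

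I do not expect a serious obstacle: the only point requiring care is bookkeeping of operator order, specifically that $G_1$ fails to commute with $D$ and $L_{m+1}$ fails to commute with $M$. That non-commutativity is exactly what produces the $\mp1$ correction terms above, and one then needs the second commutation step to land back on $G_0w = 0$ resp.\ $L_0w = 0$; once that pattern is recognised the remaining computations are routine expansions in the $1$-Weyl algebra.
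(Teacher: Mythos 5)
Your proof is correct and follows the same strategy as the paper's: show that the complementary product $G_{j-1}G_j$ (resp.\ $L_{j-1}L_j$) annihilates the relevant subspace and then invoke (\ref{ToR 4}). You are in fact more careful than the paper on the $m=1$ and $n=1$ ladders, where the paper only records $W_{n,0}\subseteq\ker G_n$ and $W_{0,m}\subseteq\ker L_m$ and leaves implicit the cross--commutators $G_jD = DG_{j-1}-1$ and $L_jM = ML_{j-1}+1$ that you derive and use to kill $G_nG_{n+1}$ on $W_{n,1}$ and $L_mL_{m+1}$ on $W_{1,m}$.
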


\begin{proof}
The relations  (\ref{def W00}) and (\ref{ToR 1}), (\ref{ToR 2}) allow to show by induction that
\begin{eqnarray*}
W_{n,0} &\subseteq& \ker G_n \q \mbox{for all } n \in \N_0 \\
W_{0,m} &\subseteq& \ker L_m \q \mbox{for all } m \in \N_0.
\end{eqnarray*}

Then the statement of the theorem follows directly with the relation (\ref{ToR 4}):
\begin{eqnarray*}
\bq\bq
\begin{array}[c]{ l @{\;}c@{\;} l @{\;}c@{\;} l @{\;}c@{\;} l }
L_{n-1} L_{n  } &=& L_{n-1} L_{n  } - G_{n-1} G_{n  } \q\! &=& n^2 -X^2      &:& W_{n,0} \to W_{n-2,0} \\[1.2ex]
L_{n  } L_{n+1} &=& L_{n  } L_{n+1} - G_{n  } G_{n+1} \q\! &=& (n+1)^2 - X^2 &:& W_{n,1} \to W_{n-2,1} \\[1.2ex]
G_{m-1} G_{m  } &=& G_{m-1} G_{m  } - L_{m-1} L_{m  }      &=& X^2 - m^2     &:& W_{0,m} \to W_{0,m-2} \\[1.2ex]
G_{m  } G_{m+1} &=& G_{m  } G_{m+1} - L_{m  } L_{m+1}      &=& X^2 - (m+1)^2 &:& W_{1,m} \to W_{1,m-2}
\end{array}
\end{eqnarray*}
\end{proof}

email: Stefan.Hilger@ku.de

\end{document}